\DeclareMathAlphabet{\mathpzc}{OT1}{pzc}{m}{it}
\newcommand{\calC}{{\mathcal C}}
\newcommand{\calD}{{\mathcal D}}
\newcommand{\calE}{{\mathcal E}}
\newcommand{\calH}{{\mathcal H}}
\newcommand{\calI}{{\mathcal I}}
\newcommand{\calK}{{\mathcal K}}
\newcommand{\calL}{{\mathcal L}}
\newcommand{\calP}{{\mathcal P}}
\newcommand{\calR}{{\mathcal R}}
\newcommand{\calU}{{\mathcal U}}
\newcommand{\calW}{{\mathcal W}}
\newcommand{\calZ}{{\mathcal Z}}
\newcommand{\polH}{{\mathbb H}}
\newcommand{\polN}{{\mathbb N}}
\newcommand{\polP}{{\mathbb P}}
\newcommand{\polQ}{{\mathbb Q}}
\newcommand{\polV}{{\mathbb V}}
\newcommand{\frakD}{{\mathfrak D}}
\newcommand{\frakE}{{\mathfrak E}}
\newcommand{\frakd}{{\mathfrak d}}
\newcommand{\Y}{{\mathpzc{Y}}}
\newcommand{\ve}{{\mathscr{U}}}
\newcommand{\C}{{\mathcal{C}}}
\newcommand{\K}{{\mathpzc{K}}}
\newcommand{\Ind}{\textup{\textsf{Ind}}}
\newcommand{\DIV}{\nabla\!{\cdot}}   
\newcommand{\GRAD}{\nabla}           
\newcommand{\LAP}{{\Delta}}          
\def\Ldeux{{{  L}^2   (\Omega)}}
\def\Hunz{{{   H}^1_0 (\Omega)}}
\def\Hs{{{   \mathbb{H}}^s   (\Omega)}}
\def\Hsd{{{   \mathbb{H}}^{-\!s\!}   (\Omega)}}
\def\Hsp1d{{{   \bf H}^{s+1}   (\Omega)}}
\def\Hdeux{{{   H}^2   (\Omega)}}
\newcommand{\Real}{\mathbb R}
\newcommand{\diff}{\, \mbox{\rm d}}
\newcommand{\vare}{{\varepsilon}}
\newcommand{\dt}{{\tau}}
\DeclareMathOperator*{\tr}{tr_\Omega}
\DeclareMathOperator*{\diam}{diam}
\DeclareMathOperator{\Var}{Var}
\newcommand{\ie}{i.e.,\@\xspace}
\newcommand{\cf}{cf.\@\xspace}
\newcommand{\mae}{a.e.~}
\newcommand{\ue}{\textup{\textsf{u}}}
\newcommand{\fe}{\textup{\textsf{f}}}
\def\scl{\left\langle}
\def\scr{\right\rangle}
\newcommand{\logLip}{{\textup{\textsf{logLip}}(\Omega)}}
\newcommand{\GL}{{\textup{\textsf{GL}}}}
\newcommand{\Laps}{{(-\Delta)^s}}
\newcommand{\ds}{\textup{\textsf{d}}_s}
\newcommand{\HL}{ \mbox{ \raisebox{6.9pt} {\tiny$\circ$} \kern-10.3pt} {H_L^1} }
\newcommand{\T}{\mathscr{T}}
\newcommand{\HLn}{\mbox{\,\raisebox{4.7pt} {\tiny$\circ$} \kern-9.3pt}{H_L^1} }  
\newcommand{\vero}{\texttt{v}}
\newcommand{\N}{\mathcal{N}}
\newcommand{\R}{\mathbb{R}}
\newtheorem{remark}[theorem]{Remark}
\numberwithin{equation}{section}
\title{Finite element approximation of the parabolic fractional obstacle problem\thanks{EO has been supported in part by NSF grant DMS-1411808 and by CONICYT through project Anillo ACT1106. AJS is partially supported by NSF grant DMS-1418784.}}
\author{Enrique Ot\'arola\thanks{Departamento de Matem\'atica, Universidad T\'ecnica Federico Santa Mar\'ia, Valpara\'iso, Chile. \texttt{enriqueotarola@gmail.com}.}
\and
Abner J.~Salgado\thanks{Department of Mathematics, University of Tennessee, Knoxville, TN 37996, USA.
\texttt{asalgad1@utk.edu}}}
\begin{document}

\maketitle

\begin{abstract}
We study a discretization technique for the parabolic fractional obstacle problem in bounded domains. The fractional Laplacian is realized as the Dirichlet-to-Neumann map for a nonuniformly elliptic equation posed on a semi-infinite cylinder, which recasts our problem as a quasi-stationary elliptic variational inequality with a dynamic boundary condition. The rapid decay of the solution suggests a truncation that is suitable for numerical approximation. We discretize the truncation with a backward Euler scheme in time and, for space, we use first-degree tensor product finite elements. We present an error analysis based on different smoothness assumptions.
\end{abstract}

\begin{keywords}
obstacle problem,
thin obstacles,
free boundaries,
finite elements,
fractional diffusion,
anisotropic elements.
\end{keywords}

\begin{AMS}{
35J70,                    
35R11,                    
35R35,                    
49M15,                    
49M20,                    
49M25,                    
65N12,                    
65N30,                    
65N50}
\end{AMS}

\section{Introduction}
\label{sec:intro}
In this work we shall be interested in the design and analysis of a finite element approximation of the so-called \emph{parabolic fractional obstacle problem}. Let $\Omega$ be an open and bounded subset of $\Real^d$ with $d\geq 1$. Given $s\in (0,1)$, an obstacle $\psi:\Omega \to \Real$, an initial datum $\ue_0:\Omega \to \Real$ and a forcing term $\fe:\Omega\times(0,T] \to \Real$, the parabolic fractional obstacle problem asks for a function $\ue :\Omega \times [0,T] \to \Real$ that satisfies the evolution variational inequality:
\begin{equation}
\label{eq:obstrong}
  \min \left\{ \diff_t \ue + \Laps \ue - \fe, \ue - \psi \right\} = 0
\end{equation}
and $\ue_{|t=0} = \ue_0$. Here, $\Laps$ denotes the fractional powers of the Laplace operator, supplemented with homogeneous Dirichlet boundary conditions, which for convenience we will simply call the fractional Laplacian. We refer to \S\ref{sub:fracLaplace} for a precise definition. 
We must immediately remark that although our exposition is for the fractional Laplacian, our techniques and results are equally applicable to fractional powers of a symmetric and uniformly elliptic second order differential operator $L$, supplemented with homogeneous Dirichlet boundary conditions:
$
L w = - \DIV( A \GRAD w ) + c w
$,
with $A \in C^{0,1}(\bar\Omega,\GL(\Real^d))$ symmetric and positive definite and $0 \leq c \in C^{0,1}(\bar\Omega,\Real)$. The only caveat is that, at the time of this writing, no regularity results are available for \eqref{eq:obstrong} with the fractional Laplacian replaced by $L^s$. 

The study of numerical techniques for nonlocal problems is a rapidly growing field of research. Fractional diffusion has received a great deal of attention in diverse areas of science and engineering such as mechanics \cite{atanackovic2014fractional}, biophysics \cite{bio}, turbulence \cite{wow}, image processing \cite{GH:14}, peridynamics \cite{HB:10} and nonlocal electrostatics \cite{ICH}. In particular, the study of constrained minimization problems such as the parabolic fractional obstacle problem \eqref{eq:obstrong} has received considerable attention. This type of problems arises, for instance, in financial mathematics as a pricing model for American options. The function $\ue$ represents the rational price of a non perpetual American option where the assets' prices are modeled by a L\'evy process, and the payoff function is $\psi$; see \cite{MR2064019,NTZ:10,MR1424787}.

Based on the Caffarelli-Silvestre extension \cite{CS:07} in previous work we provided a comprehensive analysis of the discretization of the linear elliptic case \cite{NOS}, evolution equations with fractional diffusion and Caputo fractional time derivative \cite{NOS3} and the elliptic fractional obstacle problem \cite{NOS4}. In this work we proceed in our research program and show the flexibility of the ideas developed in \cite{NOS} by studying the parabolic fractional obstacle problem \eqref{eq:obstrong}. To the best of our knowledge, this is the first work that addresses the numerical approximation of this problem.

Our presentation is organized as follows. The notation and functional setting is described in Section~\ref{sec:notation}, where we also briefly describe, in \S\ref{sub:fracLaplace}, the definition of the fractional Laplacian, its localization via the Caffarelli-Silvestre extension (\S\ref{sub:CafarrelliSilvestre}) and the well-posedness of the fractional parabolic obstacle problem; see \S\ref{sub:fracobstacle}. The numerical analysis of problem \eqref{eq:obstrong} begins in Section~\ref{sec:truncation}, where we discuss a domain truncation that allows us, in subsequent sections, to consider a space discretization using first-degree tensor product finite elements. The time discretization and its error analysis is described in Section~\ref{sec:timediscr}. The space discretization and its analysis is the content of Section~\ref{sec:space}: we provide an error analysis with minimal (\S\ref{sub:minreg}) and maximal (\S\ref{sub:maxreg}) regularities. This analysis relies on the construction and approximation properties of a positivity preserving interpolant. For $s>3/8$ we construct an interpolant with the requisite properties in Section~\ref{sub:interpolant}.

\section{Notation and preliminaries}
\label{sec:notation}

In this work $\Omega$ is a convex bounded and open subset of $\Real^d$ ($d\geq1$) with polyhedral boundary. Our ideas are equally applicable to domains with curved boundaries, but the exposition becomes rather cumbersome and so we prefer to avoid it. We will follow the notation of \cite{NOS} and define the semi-infinite cylinder and its lateral boundary by
$ \C = \Omega \times (0,\infty)$, $\partial_L\C = \partial\Omega \times (0,\infty)$.
For $\Y>0$ we define the truncated cylinder $\C_\Y = \Omega \times (0, \Y)$ and its lateral boundary $\partial_L \C_\Y = \partial\Omega \times (0,\Y)$. We also define the Dirichlet boundary $\Gamma_D = \partial_L \C_\Y \cup \Omega \times \{\Y\}$. Since we will be dealing with objects defined on $\Real^d$ and $\Real^{d+1}$, it will be convenient to distinguish the $d+1$-dimension. For $x \in \Real^{d+1}$, we denote
\[
  x = (x^1,\cdots,x^{d},x^{d+1}) = (x',x^{d+1}) = (x',y), \qquad x' \in \Real^d, y \in \Real.
\]

Whenever $X$ is a normed space we denote by $\|\cdot\|_X$ its norm and by $X'$ its dual. For normed spaces $X$ and $Y$ we write $X \hookrightarrow Y$ to indicate continuous embedding. We will follow standard notation for function spaces \cite{Adams,Tartar}. In addition, for an open set $D \subset \Real^N$, $N \geq 1$, if $\omega$ is a weight and $p \in (1,\infty)$ we denote the Lebesgue space of $p$-integrable functions with respect to the measure $\omega \diff x$ by $L^p(\omega,D)$; see \cite{HKM,Kufner80,Turesson}. Similar notation will be used for weighted Sobolev spaces. If $T>0$ and $\phi: D \times [0,T] \to \Real$, we consider $\phi$ as a function of $t$ with values in a Banach space $X$, $\phi :[0,T] \ni t \mapsto \phi(t) \equiv \phi(\cdot,t) \in X$. For $1 \leq p \leq \infty$ we will say that $\phi \in L^p(0,T;X)$ if the mapping $t \mapsto \| \phi(t) \|_X$ is in $L^p(0,T)$. We also introduce the space $BV(0,T;X)$ of $X$-valued functions of bounded variation \cite[Definition A.2]{Brezis}
\[
 \Var_X g:= \sup_{\calP} \left\{ \sum_{j=1}^{J} \|g(r_j) - g(r_{j-1}) \|_X \right\} < \infty,
\]
where the supremum is taken over all partitions 
$
  \calP=\{0=r_0 <  \ldots < r_j < \ldots < r_J = T\}
$
of the time interval $[0,T]$. We recall that if $g \in BV(0,T;X)$, then at every point $t_0 \in [0,T)$ there exists the \emph{right} limit $g_{+}(t_0) = \lim_{t \downarrow t_0} g(t)$ \cite[Lemma A.1]{Brezis}.

Let $\calK \in \polN$ be the number of time steps. We define the uniform time step as $\dt = T/\calK$ and we set $t_k = k \dt$, $k=0,\ldots, \calK$. Given a function $w :[0,T] \to X$, we denote $w^k = w(t_k) \in X$ and $w^\dt = \{ w^k \}_{k=0}^\calK \subset X$. For any sequence $W^\dt \subset X$, we define the piecewise constant interpolant $\bar{W}^\dt \in L^\infty(0,T;X)$ by
\begin{equation*}
\label{eq:defofbar}
  \bar{W}^\dt(t) = W^{k+1} \quad t \in (t_k,t_{k+1}], \quad k = 0, \ldots, \calK-1.
\end{equation*}
We also define the piecewise linear interpolant $\hat{W}^\dt \in C([0,T];X)$ by
\begin{equation*}
\label{eq:defofhat}
   \hat{W}^\dt(t) = \frac{t - t_k}\dt W^{k+1} + \frac{t_{k+1} - t}\dt W^k \quad t \in [t_k,t_{k+1}], \quad k = 0, \ldots, \calK-1.
\end{equation*}
The first order backward difference operator $\frakd$ is defined by $ \frakd W^{k+1} = W^{k+1} - W^k $.
We note that $\diff_t \hat{W}^\dt(t) = \dt^{-1} \frakd W^{k+1}$ for all $t \in (t_k,t_{k+1})$ and $k = 0, \ldots, \calK-1$. Finally, we also notice that, for any sequence $W^\dt \subset X$ and $p \in [1,\infty)$ we have
\[
  \left\| \bar{W}^\dt \right\|_{L^p(0,T;X)} = \left( \dt \sum_{k=1}^{\calK} \| W^k\|_X^p \right)^{1/p},
\]
and
$
  \| \bar{W}^\dt \|_{L^\infty(0,T;X)} = \| \hat{W}^\dt \|_{L^\infty(0,T;X)} 
  = \max \left\{ \left\|W^k \right\|_X : k=0,\ldots, \calK \right\}
$.

The relation $a\lesssim b $ indicates that $a \leq C b$ for a constant that does not depend on either $a$ or $b$, but it might depend on the problem data. The value of $C$ might change at each occurrence. 

\subsection{The fractional Laplacian}
\label{sub:fracLaplace}

For a bounded domain there are several ways, not necessarily equivalent, to define the fractional Laplacian; see \cite{NOS} for a discussion. As in \cite{NOS} we will adopt that based on spectral theory \cite{BS}. Namely, since $-\LAP: \calD(\LAP) \subset \Ldeux \to \Ldeux$ is an unbounded, positive and closed operator with dense domain $\calD(\LAP) = \Hunz \cap \Hdeux$ and its inverse is compact, there is a countable collection of eigenpairs $\{\lambda_l, \varphi_l \}_{l \in \polN} \subset \Real^+ \times \Hunz$ such that $\{\varphi_l\}_{l \in \polN}$ is an orthonormal basis of $\Ldeux$ and an orthogonal basis of $\Hunz$. If $w \in C_0^\infty(\Omega)$
\[
  w = \sum_{l \in \polN} w_l \varphi_l, \qquad w_l = \int_\Omega w \varphi_l \diff x',
\]
then, for any $s \in (0,1)$, we define
$  \Laps w = \sum_{l \in \polN} \lambda_l^s w_l \varphi_l$, 
As it is well known, the theory of Hilbert scales presented in \cite[Chapter 1]{Lions} shows that $\calD \left((-\Delta)^{s/2} \right) = \Hs = [\Ldeux,\Hunz]_s$, \ie the real interpolation between $\Ldeux$ and $\Hunz$. Consequently, the definition of $\Laps$ can be extended by density to the space $\Hs$. If, for $0<s<1$, we denote by $\Hsd$ the dual space of $\Hs$, then $\Laps : \Hs \to \Hsd$ is an isomorphism.

\subsection{The Caffarelli-Silvestre extension problem}
\label{sub:CafarrelliSilvestre}

The Caffarelli-Silvestre result \cite{CS:07,NOS}, 
requires us to deal with a nonuniformly elliptic equation. With this in mind, we define the weighted Sobolev space
\begin{equation*}
\label{eq:defofHL}
  \HL(y^{\alpha},\C) = \left\{ w \in H^1(y^\alpha,\C): w = 0 \textrm{ on } \partial_L \C\right\}.
\end{equation*}
Since $\alpha \in (-1,1)$, $|y|^\alpha$ belongs to the Muckenhoupt class $A_2(\Real^{n+1})$; see \cite{Javier,Turesson}. Consequently, $\HL(y^{\alpha},\C)$ is a Hilbert space, and smooth functions are dense in $\HL(y^{\alpha},\C)$ (\cf \cite[Proposition 2.1.2, Corollary 2.1.6]{Turesson}).

As \cite[(2.21)]{NOS} shows, the following \emph{weighted Poincar\'e inequality} holds:
\begin{equation}
\label{Poincare_ineq}
  \| w \|_{L^2(y^{\alpha},\C)} \lesssim \| \nabla v \|_{L^2(y^{\alpha},\C)}, \quad \forall w \in \HL(y^{\alpha},\C).
\end{equation}
Then, the seminorm of $H^1(y^\alpha,\C)$ is equivalent to the norm in $\HL(y^{\alpha},\C)$. For $w \in H^1(y^{\alpha},\C)$ we denote by $\tr w$  the trace of $w$ onto $\Omega \times \{ 0 \}$. We recall (\cite[Prop.~2.5]{NOS})
\begin{equation}
\label{Trace_estimate}
  \tr \HL(y^{\alpha},\C) = \Hs, \qquad \| \tr w \|_{\Hs} \leq C_{\tr} \| w \|_{ \HLn(y^{\alpha},\C) }.
\end{equation}

The seminal work of Caffarelli and Silvestre \cite{CS:07,NOS} 
showed that the operator $\Laps$ can be realized as the Dirichlet-to-Neumann map for a nonuniformly elliptic boundary value problem. Namely, if $\calU \in \HL(y^{\alpha},\C)$ solves
\begin{equation}
\label{eq:extension}
    -\DIV\left( y^\alpha \GRAD \calU \right) = 0  \text{ in } \C, \qquad
    \calU = 0  \text{ on } \partial_L \C, \qquad
    \partial_\nu^\alpha \calU = \ds f  \text{ on } \Omega \times \{0\},
\end{equation}
where $\alpha = 1-2s$, $\partial_\nu^\alpha \calU = -\lim_{y\downarrow 0} y^\alpha \calU_y$ and $\ds = 2^\alpha \Gamma(1-s)/\Gamma(s)$ is a normalization constant, then $u = \tr\calU 	 \in \Hs$ solves
\begin{equation}
\label{eq:fraclap}
  \Laps u = f.
\end{equation}

The reader is referred to \cite{NOS} for a detailed and thorough exposition on how the groundbreaking identity given by \eqref{eq:extension} can be used to design and analyze an efficient finite element approximation of solutions to \eqref{eq:fraclap}.

\subsection{The parabolic fractional obstacle problem}
\label{sub:fracobstacle}

Given an obstacle $\psi$ that satisfies $\psi \in \Hs \cap C(\bar\Omega)$ and $\psi \leq 0$ on $\partial\Omega$, let 
\begin{equation*}
\label{eq:deofK}
  \K(\Omega) = \left\{ w \in \Hs: w(x') \geq \psi(x')\  \mae x' \in \Omega \right\}
\end{equation*}
be the convex set of admissible functions, and let $\Ind_{\K(\Omega)}$ be its indicator function, which, since $\K(\Omega)$ is closed and convex, is a non-smooth lower-semi-continuous convex function. We consider the energy
\[
  J(\phi) = \frac12 \| \phi \|_{\Hs}^2 + \Ind_{\K(\Omega)}(\phi).
\]
With this notation, the parabolic fractional obstacle problem can be understood as the gradient flow for $J$, or an evolution equation for a maximal monotone operator: Given an initial datum $\ue_0 \in \Ldeux$ and a function $\fe \in L^2(0,T;L^2(\Omega))$, find $\ue$ such that $\ue(0) = \ue_0$ and it solves the differential inclusion
\begin{equation}
\label{eq:gradflow}
  \diff_t \ue(t) + \partial J(\ue(t)) \ni \fe(t) \quad \mae t\in (0,T).
\end{equation}

Problem \eqref{eq:gradflow} can be equivalently understood as an evolution variational inequality: Find $\ue$ such that $\ue(t) \in \K(\Omega)$ and for \mae $t \in (0,T)$ and all $\phi \in \K(\Omega)$
\begin{equation}
\label{eq:varineq}
  ( \diff_t \ue(t), \ue(t) - \phi )_{\Ldeux} + \scl \Laps \ue(t), \ue(t) - \phi \scr \leq ( \fe(t), \ue(t) - \phi )_{\Ldeux}, 
\end{equation}
and $\ue(0) = \ue_0$. Here and in what follows $(\cdot,\cdot)_{\Ldeux}$ denotes the inner product of $\Ldeux$ and $\scl\cdot,\cdot\scr$ the duality pairing between $\Hsd$ and $\Hs$.

From these formulations existence and uniqueness of solutions and a priori estimates can be easily obtained with standard techniques on maximal monotone operators \cite{Brezis}. For instance, if $\ue$ solves \eqref{eq:varineq}, then it satisfies the energy estimate
\begin{equation*}
\label{eq:stability0}
\|\ue \|_{L^\infty(0,T;\Ldeux)}^2 + \| \ue \|^2_{L^2(0,T;\Hs)} \lesssim \frakD^2,
\end{equation*}
where we denoted
\begin{equation}
\label{eq:defoffrakD}
  \frakD^2 = \frakD^2(\ue_0, \fe, \psi ) = \|\ue_0\|_{L^2(\Omega)}^2 + \| \fe \|_{L^2(0,T;L^2(\Omega))}^2 + \| \psi \|_{\Hs}^2.
\end{equation}
If $\fe \in L^2(0,T;L^2(\Omega))$ and $\ue_0 \in \K(\Omega)$, then there exists a unique \emph{strong solution}, that is $\ue \in C^{0}([0,T];L^2(\Omega))$ which is locally absolutely continuous in $(0,T)$ and satisfies \eqref{eq:varineq} at almost every point. In addition, we have that $\ue \in H^1(0,T; L^2(\Omega))$ and the mapping $t \mapsto J(\ue(t))$ is locally absolutely continuous in $(0,T]$, which implies that $\ue \in W_{loc}^{1,1}(0,T;\Hs)$, and that the following estimate holds \cite[Theorem 3.6]{Brezis}
\begin{equation*}
\label{eq:stability}
\|\diff_t \ue (t)\|_{L^2(\Omega)}^2 + \diff_t \| \ue(t)\|^2_{\Hs} = ( \fe, \diff_t \ue(t) )_\Ldeux \quad \mae t \in (0,T).
\end{equation*}
If, moreover, $\ue_0 \in \calD(\partial J)$ and $\fe \in BV(0,T;\Ldeux)$, then $\ue \in C^{0,1}([0,T];L^2(\Omega))$  \cite[Proposition 3.3]{Brezis}. Finally, if $\fe \in W^{1,1}(0,T;\Ldeux)$ we have
\[
 \frac12 \diff_t \|\diff_t \ue(t)\|^2_{L^2(\Omega)} \leq ( \fe_t, \diff_t \ue(t) )_\Ldeux, 
\]
in the distributional sense.

Let us now use the Caffarelli--Silvestre extension detailed in \S\ref{sub:fracLaplace} to write an obstacle problem that is equivalent to \eqref{eq:varineq}. To do this, we define the set
\begin{equation*}
\label{eq:extK}
  \K(\C) = \left\{ w \in \HL(y^\alpha,\C): \tr w(x') \geq \psi(x')\  \mae x'\in \Omega \right\}.
\end{equation*}
Problem \eqref{eq:varineq} can then be equivalently stated as: Find $\ve :[0,T] \to \K(\C)$ such that for \mae $t \in (0,T)$ and every $\phi \in \K(\C)$
\begin{multline}
  ( \tr \diff_t \ve(t), \tr (\ve(t)- \phi) )_{\Ldeux} + a(\ve(t), \ve(t)- \phi ) 
  \\
  \leq ( \fe , \tr( \ve(t) - \phi ) )_{\Ldeux}, 
  \label{eq:gradflowextension}
\end{multline}
with $\tr \ve(0) = \ue_0$. Here the bilinear form $a$ is defined by
\begin{equation*}
\label{eq:defofa}
  a(w,\phi) = \frac1\ds \int_\C y^\alpha \GRAD w \GRAD \phi \diff x' \diff y, \quad \forall w, \phi \in \HL(y^\alpha,\C).
\end{equation*}
The description of the functional setting for problem \eqref{eq:gradflowextension}, together with existence and uniqueness results, follow the analysis developed for problem \eqref{eq:gradflow}; see \cite{Brezis,NSV}. In particular, we have the energy inequality
\begin{equation}
\label{eq:stab_ext}
  \|\tr \ve\|_{L^\infty(0,T;\Ldeux)}^2 + \| \ve \|^2_{L^2(0,T;\HLn(y^{\alpha},\C))} \lesssim \frakD^2,
\end{equation}
where $\frakD$ is defined in \eqref{eq:defoffrakD}. We shall be more specific on the smoothness of the data and the consequences on the regularity of the solution when we perform the discretization and its analysis. Let us now contempt ourselves with mentioning that, provided $\ve$ is sufficiently smooth, the following complementarity system holds:
\begin{equation*}
\label{eq:obsexstrong}
  \calZ := \partial^\alpha_\nu\ve + \ds \tr \diff_t \ve - \ds \fe \ge 0, \quad
  \tr\ve - \psi \ge 0, \quad
  \calZ \left(\tr \ve - \psi \right) = 0.
\end{equation*}

\section{Truncation}
\label{sec:truncation}

The variational inequality \eqref{eq:gradflowextension} is posed on a infinite domain and, consequently, it cannot be directly approximated with finite element-like techniques. A first step towards the discretization is to truncate the domain $\C$ to a bounded cylinder $\C_\Y = \Omega \times (0,\Y)$ and study the effect of this truncation. We begin with a result that shows the exponential decay of the solution to \eqref{eq:gradflowextension}; compare with \cite[Proposition 3.1]{NOS}, \cite[Lemma 4.8]{NOS4} and \cite[Proposition 4.1]{NOS3}.

\begin{lemma}[exponential decay]
\label{lem:expdecay}
If $\ue_0 \in \calK(\Omega)$, $\psi \in \Hs\cap C(\bar\Omega)$ and $\fe \in L^2(0,T;L^2(\Omega))$, then, for every $\Y\geq1$, we have
\[
  \| \GRAD \ve \|_{L^2(0,T;L^2(y^\alpha,\Omega\times(\Y,\infty))} \lesssim e^{-\Y/2} \frakD,
\]
where the hidden constant does not depend on neither $\ve$ nor the problem data.
\end{lemma}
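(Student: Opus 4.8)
The plan is to mimic the elliptic energy-decay argument of \cite[Proposition~3.1]{NOS}, but now carried along the time evolution. First I would introduce a smooth cut-off in the extended variable: fix $\Y\geq 1$ and let $\eta=\eta_\Y:[0,\infty)\to[0,1]$ be a Lipschitz function with $\eta\equiv 0$ on $(0,\Y/2)$, $\eta\equiv 1$ on $(\Y,\infty)$, and $|\eta'|\lesssim \Y^{-1}\lesssim 1$. Since $\tr(\eta^2\ve(t))=0$ (the cutoff vanishes near $y=0$), the function $\phi(t)=\ve(t)-\eta^2\ve(t)$ is an admissible test function in \eqref{eq:gradflowextension} for a.e.\ $t$: indeed $\tr\phi(t)=\tr\ve(t)\geq\psi$, so $\phi(t)\in\K(\C)$. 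Testing with this $\phi$, the trace term on the left and the right-hand side both drop out (because $\tr(\ve-\phi)=0$), and we are left with $a(\ve(t),\eta^2\ve(t))\leq 0$ for a.e.\ $t\in(0,T)$, i.e.
\begin{equation*}
  \int_\C y^\alpha \GRAD\ve \cdot \GRAD(\eta^2\ve)\diff x'\diff y \leq 0 \quad\mae t\in(0,T).
\end{equation*}

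Next I would expand $\GRAD(\eta^2\ve)=\eta^2\GRAD\ve+2\eta\eta'\ve\, e_{d+1}$ and use the identity/Cauchy--Schwarz--Young trick exactly as in the elliptic case: writing $\GRAD(\eta\ve)=\eta\GRAD\ve+\eta'\ve\,e_{d+1}$, one gets
\begin{equation*}
  \int_\C y^\alpha |\GRAD(\eta\ve)|^2 \leq \int_\C y^\alpha |\eta'|^2 \ve^2 \lesssim \int_{\Omega\times(\Y/2,\Y)} y^\alpha \ve^2 \diff x'\diff y,
\end{equation*}
pointwise in $t$. In particular $\int_{\Omega\times(\Y,\infty)} y^\alpha|\GRAD\ve(t)|^2 \lesssim \int_{\Omega\times(\Y/2,\Y)} y^\alpha \ve(t)^2$. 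Now I integrate in $t$ over $(0,T)$. The remaining task is to bound the weighted $L^2$ mass of $\ve$ on the shell $\Omega\times(\Y/2,\Y)$ by $e^{-\Y}\frakD^2$. For this I would invoke the exponential decay of the eigenfunction expansion: from \eqref{eq:stab_ext} we control $\|\ve\|^2_{L^2(0,T;\HLn(y^\alpha,\C))}\lesssim\frakD^2$, and the Caffarelli--Silvestre extension has the explicit separable form $\ve(t)=\sum_l u_l(t)\varphi_l(x')\, \psi_s(\sqrt{\lambda_l}y)$ with the Bessel-type profile satisfying $\psi_s(0)=1$ and the bound $y^\alpha|\psi_s(\sqrt{\lambda_l}\,y)|^2 \lesssim e^{-\sqrt{\lambda_l}\,y}$-type control; integrating the profile over $(\Y/2,\infty)$ gives a factor $e^{-c\sqrt{\lambda_l}\Y}\lesssim e^{-\Y}$ (using $\Y\geq1$ and $\lambda_1>0$). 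Combining, $\int_0^T\int_{\Omega\times(\Y/2,\Y)} y^\alpha\ve^2 \lesssim e^{-\Y}\sum_l\lambda_l^s\int_0^T|u_l(t)|^2 \lesssim e^{-\Y}\|\ve\|^2_{L^2(0,T;\HLn(y^\alpha,\C))}\lesssim e^{-\Y}\frakD^2$. Taking square roots yields the claimed $e^{-\Y/2}\frakD$ bound.

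The main obstacle I anticipate is the quantitative control of the extension profile on the shell $(\Y/2,\Y)$ in terms of the Sobolev energy on all of $\C$ — i.e.\ making precise the step "$\int_{\Omega\times(\Y/2,\Y)} y^\alpha\ve(t)^2 \lesssim e^{-\Y}\|\ve(t)\|^2_{\HLn(y^\alpha,\C)}$" uniformly in $t$. In \cite{NOS,NOS3,NOS4} this is done by writing $\ve$ in the $\{\varphi_l\}$ basis, noting that each Fourier mode solves an explicit ODE in $y$ whose solution decays like a modified Bessel function, and using $\lambda_l\geq\lambda_1>0$ to convert spatial decay in $y$ into the uniform factor $e^{-\Y}$; the only new wrinkle here is that the coefficients $u_l=u_l(t)$ now depend on time, so one carries the estimate pointwise in $t$ and integrates at the end, which is harmless since everything is linear in the squared quantities. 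An alternative, avoiding eigenexpansions, is a Gronwall/energy argument directly on the cylinder: define $E(Y')=\int_{\Omega\times(Y',\infty)} y^\alpha|\GRAD\ve|^2$ and show, via the cutoff inequality together with the trace-Poincar\'e inequality \eqref{Poincare_ineq}, a differential inequality $E(Y')\lesssim -E'(Y')$ forcing exponential decay; this is the route taken in \cite[Proposition~3.1]{NOS} and transfers verbatim once the test function manipulation above is in place.
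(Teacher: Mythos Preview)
Your argument is correct but takes a longer route than the paper. The paper observes directly that, for a.e.\ $t$, $\ve(t)$ \emph{is} the $\alpha$-harmonic extension of its own trace $\tr\ve(t)$: testing \eqref{eq:gradflowextension} with $\phi=\ve(t)\pm\zeta$ for any $\zeta\in\HL(y^\alpha,\C)$ with $\tr\zeta=0$ (both choices are admissible since $\tr\phi=\tr\ve(t)\geq\psi$) yields $a(\ve(t),\zeta)=0$, and uniqueness for the Dirichlet problem \eqref{eq:decay_aux} gives the identification. One then cites the elliptic decay estimate of \cite[Proposition~3.1]{NOS} pointwise in $t$, integrates over $(0,T)$, and invokes the trace estimate \eqref{Trace_estimate} together with the energy bound \eqref{eq:stab_ext}. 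Your cutoff-testing step followed by the Caccioppoli/eigenexpansion argument is, in effect, a re-derivation of that elliptic proposition inside the parabolic setting; indeed, the separable formula $\ve(t)=\sum_l u_l(t)\varphi_l(x')\psi_s(\sqrt{\lambda_l}y)$ that you invoke already presupposes the $\alpha$-harmonicity of $\ve(t)$, at which point the elliptic result is available as a black box and the preceding Caccioppoli manipulation becomes redundant. What your approach buys is self-containment (no need to unpack the cited proposition), while the paper's buys brevity.
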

\begin{proof}
Consider, for \mae $t \in (0,T]$, the function $w(t) \in \HL(y^\alpha, \C)$ that solves
\begin{equation}
\label{eq:decay_aux}
  \DIV(y^\alpha \GRAD w(t) ) = 0 \ \text{in } \C, \quad 
  w(t)_{|\partial_L\C} = 0, \quad 
  \tr w(t) = \tr \ve(t) \  \text{on } \Omega \times\{0\}.
\end{equation}
Since $\ve$ solves the fractional parabolic obstacle problem \eqref{eq:gradflowextension} and problem \eqref{eq:decay_aux} has a unique solution, we immediately conclude that for \mae $t \in (0,T]$, we have $w(t) = \ve(t)$.  We now apply the decay estimate of \cite[Proposition 4.1]{NOS} to problem \eqref{eq:decay_aux} to obtain 
\[
  \| \GRAD w(t) \|_{L^2(y^\alpha, \Omega\times(\Y,\infty) )} \lesssim e^{-\Y/2} \|\tr \ve(t)\|_{\Hs}.
\]
Finally, integrating over time, and invoking the trace estimate \eqref{Trace_estimate} and the stability estimate \eqref{eq:stab_ext} for problem \eqref{eq:gradflowextension} in terms of $\ue_0$, $f$ and $\psi$, we arrive at
\[
\| \GRAD w \|_{L^2(0,T;L^2(y^\alpha,\Omega\times(\Y,\infty))} \lesssim e^{-\Y/2} \| \ve \|_{L^2(0,T;\HLn(y^{\alpha},\C_{\Y}))} \lesssim e^{-\Y/2} \frakD,
\]
where $\frakD$ is defined in \eqref{eq:defoffrakD}. This concludes the proof.
\end{proof}

The exponential decay of Lemma~\ref{lem:expdecay} allows us to consider a truncated version of the variational inequality \eqref{eq:gradflowextension}. To write this problem we define, for $\Y\geq1$, the Sobolev space
\begin{equation*}
\label{eq:defofspace}
  \HL(y^\alpha,\C_\Y) = \left\{ w \in H^1(y^\alpha,\C_\Y): w_{|\Gamma_D} = 0  \right\},
\end{equation*}
the convex set of admissible functions
\begin{equation*}
\label{eq:defofKY}
  \K(\C_\Y) = \left\{ w \in \HL(y^\alpha,\C_\Y) : \tr w(x') \geq \psi(x') \ \mae  x' \in \Omega \right\}
\end{equation*}
and the bilinear form
\begin{equation*}
\label{eq:defofaY}
  a_\Y(w,\phi) = \frac1\ds \int_{\C_\Y} y^\alpha \GRAD w \GRAD \phi \diff x' \diff y \quad \forall w,\phi \in \HL(y^\alpha,\C_\Y).
\end{equation*}
With these definitions we consider the following truncated problem: Find $v:[0,T] \to \K(\C_\Y)$ such that $\tr v (0) = \ue_0$ and for \mae $t \in (0,T)$ and all $\phi \in \K(\C_\Y)$
\begin{equation}
\label{eq:truncgradflow}
  ( \tr \diff_t v(t), \tr (v(t) - \phi) )_{\Ldeux} + a_\Y(v(t), v(t)-\phi) \leq ( \fe, \tr (v(t) - \phi) )_{\Ldeux}.
\end{equation}
The analysis of this problem follows that of \eqref{eq:gradflow}, developed in \S\ref{sub:fracobstacle}. For brevity, we only present the energy estimate
\begin{equation}
\label{eq:stab_ext_trun}
  \|\tr v \|_{L^\infty(0,T; \Ldeux)}^2 + \| v\|^2_{L^2(0,T;\HLn(y^{\alpha},\C_\Y))} \lesssim \frakD^2.
\end{equation}

We define $\calH_{\alpha} :\Hs \rightarrow \HL(y^{\alpha},\C_{\Y})$, the truncated $\alpha$-harmonic extension operator as follows: if $w \in \Hs$, then $\calW = \calH_\alpha w \in \HL(y^{\alpha},\C_{\Y})$ solves
\begin{equation}
\label{eq:mathcalH}
\DIV(y^{\alpha} \nabla \mathcal{W}) = 0 \text{ in } \C_\Y, 
\quad
\mathcal{W} = 0 \text{ on } \partial_L \C_\Y \cup \Omega \times \{\Y\} , 
\quad
\mathcal{W} = w\text{ on } \Omega \times \{0\}.
\end{equation}
We recall \cite[Theorem 2.7]{NOS} for problem \eqref{eq:mathcalH}. If $w \in \mathbb{H}^{1+s}(\Omega)$, then
\begin{equation}
\label{eq:reg_estimates}
 \| \nabla \nabla_{x'} \mathcal{W}\|_{L^2(y^{\alpha},\C_{\Y})} + \| \partial_{yy} \mathcal{W}\|_{L^2(y^{\beta},\C_{\Y})} \lesssim \| w \|_{\mathbb{H}^{1+s}(\Omega)}.
\end{equation}

The following result shows that by considering \eqref{eq:truncgradflow} instead of \eqref{eq:gradflowextension} we only incur in an exponentially small error; compare with \cite[Lemma 3.3]{NOS}, \cite[Lemma 4.3]{NOS3} and \cite[Proposition 4.20]{NOS4}.

\begin{proposition}[exponential error estimate]
\label{prop:experror}
Let $\ve$ and $v$ be the solutions of \eqref{eq:gradflowextension} and \eqref{eq:truncgradflow}, respectively. Then, for $\Y\geq1$, we have
\[
  \| \tr (\ve - v) \|_{L^\infty(0,T;L^2(\Omega))}^2 + \| \GRAD( \ve - v ) \|_{L^2(0,T;L^2(y^\alpha,\C_\Y))}^2 \lesssim e^{-\Y/4}\frakD^2,
\]
where the hidden constant does not depend on neither $\ve$, $v$,  nor the problem data.
\end{proposition}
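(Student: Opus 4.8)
The plan is to compare the two evolution variational inequalities \eqref{eq:gradflowextension} and \eqref{eq:truncgradflow} by testing each with a suitable competitor built from the other solution, so that the bilinear forms combine into an energy identity for the error $e = \ve - v$ modulo a boundary defect supported on $\Omega\times(\Y,\infty)$. The obvious difficulty is that $\ve$ lives on the infinite cylinder $\C$ while $v$ lives on $\C_\Y$, so neither is an admissible test function for the other problem. To get around this I would introduce a cutoff-type construction: let $w$ be the truncated $\alpha$-harmonic extension of $\tr\ve$ (that is, $w(t)=\calH_\alpha(\tr\ve(t))$ from \eqref{eq:mathcalH}), which lies in $\HL(y^\alpha,\C_\Y)$ and has the same trace as $\ve(t)$, hence $w(t)\in\K(\C_\Y)$ and is admissible in \eqref{eq:truncgradflow}; and, conversely, extend $v$ by zero for $y>\Y$ (still in $\HL(y^\alpha,\C)$ with the same trace), which is then admissible in \eqref{eq:gradflowextension}.

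First I would test \eqref{eq:gradflowextension} at time $t$ with $\phi = \bar v(t)$ (the zero-extension of $v(t)$) and test \eqref{eq:truncgradflow} with $\phi = w(t)$, noting that $\tr\bar v = \tr v$, $\tr w = \tr\ve$, so that the two trace terms on the right of the inequalities and in the time-derivative pairings telescope against the corresponding terms when we add the two inequalities. The key algebraic point is that $a(\ve,\ve-\bar v) = a_\Y(\ve,\ve-v)$ because $\ve$ and $\bar v$ agree on $\C_\Y$ and $\bar v\equiv0$ on $\Omega\times(\Y,\infty)$; and $a_\Y(v,v-w)$ plus a term comparing $w$ with $\ve$ on $\C_\Y$ can be assembled. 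After adding, the mixed terms $a_\Y(\ve - v,\ve - v)$ emerge on the left, and the combined time-derivative term is $(\tr\diff_t(\ve-v),\tr(\ve-v))_{\Ldeux} = \tfrac12\diff_t\|\tr(\ve-v)\|_{\Ldeux}^2$, while the residual terms are of the form $a$-pairings of $\ve$ or $v$ against $\ve - w$, which is controlled because $\ve - w$ has zero trace and is $\alpha$-harmonic-like, so its gradient is dominated by $\|\GRAD\ve\|_{L^2(y^\alpha,\Omega\times(\Y,\infty))}$.

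The decisive estimate is then to bound the defect $\|\GRAD\ve\|_{L^2(0,T;L^2(y^\alpha,\Omega\times(\Y,\infty)))}$, and this is exactly Lemma~\ref{lem:expdecay}, which gives $\lesssim e^{-\Y/2}\frakD$; combined with a Young's inequality (absorbing a factor $\delta\|\GRAD(\ve-v)\|^2$ into the left side, paying $\delta^{-1}e^{-\Y}\frakD^2$ on the right, then choosing $\delta\sim e^{-\Y/4}$), one lands on the stated $e^{-\Y/4}\frakD^2$ bound after integrating from $0$ to $t$ and using $\tr(\ve-v)(0)=\ue_0-\ue_0=0$. Finally I would take the supremum over $t\in[0,T]$ for the $L^\infty(0,T;\Ldeux)$ part and integrate over $[0,T]$ for the $L^2$-in-time gradient part, invoking the a priori stability bounds \eqref{eq:stab_ext} and \eqref{eq:stab_ext_trun} wherever a raw $\|\ve\|$ or $\|v\|$ norm appears in the error-term estimates. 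The main obstacle I anticipate is the careful bookkeeping of the harmonic-extension comparison terms — verifying that $a_\Y(\ve - w,\cdot)$ and the cross terms involving $w$ genuinely reduce to quantities controlled by the exponential-decay defect and not by the full $\HL(y^\alpha,\C_\Y)$ norm of $\ve$ — together with tracking the loss in the exponent (from $e^{-\Y/2}$ to $e^{-\Y/4}$) through the Young's inequality balancing; the time-derivative term is handled exactly as in the stationary analogues cited, \cite[Lemma 3.3]{NOS} and \cite[Proposition 4.20]{NOS4}, so I do not expect trouble there.
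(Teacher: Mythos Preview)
Your overall strategy---test each variational inequality with (a modification of) the other solution, add, and control the defect by the exponential tail of $\ve$---is exactly the paper's. The one substantive difference is the test function you feed into \eqref{eq:truncgradflow}. The paper takes $\phi=\rho\ve$, where $\rho$ is an explicit piecewise-linear cutoff equal to $1$ on $[0,\Y/2]$ and $0$ at $\Y$; this makes the defect term $a_\Y(v,(\rho-1)\ve)$ supported in $\Omega\times[\Y/2,\Y]$, so a weighted Poincar\'e inequality plus Lemma~\ref{lem:expdecay} (applied at height $\Y/2$, whence the $e^{-\Y/4}$) finishes immediately. Your choice $\phi=w=\calH_\alpha(\tr\ve)$ is perfectly admissible and yields the defect $a_\Y(v,\ve-w)$, but your claim that $\|\GRAD(\ve-w)\|_{L^2(y^\alpha,\C_\Y)}$ is dominated by the tail $\|\GRAD\ve\|_{L^2(y^\alpha,\Omega\times(\Y,\infty))}$ is not as immediate as you suggest: $\ve-w$ is $\alpha$-harmonic in $\C_\Y$ with zero trace at $y=0$ and boundary datum $\ve(\cdot,\Y)$ at $y=\Y$, and to bound its energy you need the Dirichlet principle with a competitor---the natural one being precisely $(1-\rho)\ve$. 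So your route quietly needs the paper's cutoff anyway; it is correct but less direct.

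A second, minor point: your account of how $e^{-\Y/2}$ degrades to $e^{-\Y/4}$ via a Young-inequality balancing with $\delta\sim e^{-\Y/4}$ is off. No absorption into $\|\GRAD(\ve-v)\|^2$ is needed. After integrating in time, the right-hand side is simply
\[
  \|\GRAD v\|_{L^2(0,T;L^2(y^\alpha,\C_\Y))}\,\|\GRAD(\text{defect})\|_{L^2(0,T;L^2(y^\alpha,\C_\Y))}
  \lesssim \frakD\cdot e^{-\Y/4}\frakD,
\]
using \eqref{eq:stab_ext_trun} for the first factor and Lemma~\ref{lem:expdecay} at height $\Y/2$ for the second. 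The exponent loss comes from the geometry of the cutoff region, not from any balancing.
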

\begin{proof}
By a trivial zero extension we realize that the solution $v$ to problem \eqref{eq:truncgradflow} belongs to $\K(\C)$, then we can set $\phi = v$ in \eqref{eq:gradflowextension}. We would like to set $\phi=\ve$ in \eqref{eq:truncgradflow} but, although it satisfies the constraints, it is not an admissible test function, as it does not have a vanishing trace at $y = \Y$. For this reason, instead, we set $\phi = \rho \ve$ in \eqref{eq:truncgradflow}, where $\rho \in W^{1,\infty}(0,\infty)$ is the following smooth cutoff function:
\[
  \rho(y) = 1 \quad 0 \leq y \leq \frac\Y2, \qquad
  \rho(y) = \frac2\Y \left( \Y - y \right) \quad  \frac\Y2 < y < \Y, \qquad
  \rho(y) =0 \quad \Y \geq y.
\]
With these choices of test functions we add the ensuing inequalities to obtain
\[
 \frac12 \diff_t \| \tr (\ve - v) \|_{L^2(\Omega)}^2 + a(\ve, \ve - v ) \leq a_\Y( v, \rho \ve - v ).
\]
We now notice that
\[
  a(\ve, \ve- v) = a_\Y(\ve,\ve - v ) + \| \GRAD \ve \|_{L^2(y^{\alpha},\Omega\times(\Y,\infty))}^2 \geq a_\Y(\ve,\ve - v ),
\]
so that we obtain 
\begin{equation}
\label{eq:truncenergy}
  \frac12 \diff_t \| \tr (\ve - v) \|_{L^2(\Omega)}^2 + a_\Y(\ve - v, \ve - v ) \leq a_\Y(v, (\rho-1) \ve). 
\end{equation}
It remains then to bound the right hand side of \eqref{eq:truncenergy}. A straightforward computation reveals that if $y<\Y/2$ we have that $(\rho -1 )\ve \equiv 0$, otherwise
\[
  |\GRAD (\rho -1 )\ve|^2 \leq  2 \left( \frac{4}{\Y^2} \ve^2 + |\nabla \ve|^2\right),
\]
and thus
\[
  \| \GRAD (\rho-1)\ve \|_{L^2(y^\alpha,\C_\Y)}^2 
  \leq 2
  \left( \frac{4}{\Y^2}\int_{\Y/2}^\Y \int_{\Omega} y^{\alpha}|\ve|^2\diff x' \diff y +
  \int_{\Y/2}^\Y \int_{\Omega}y^{\alpha} |\nabla \ve|^2\diff x' \diff y\right). 
\]
Invoking a version of the Poincar\'e inequality \eqref{Poincare_ineq} based on the interval $[\Y/2,\Y]$, we conclude
$
  \| \GRAD (\rho-1)\ve \|_{L^2(y^\alpha,\C_\Y)}^2  \lesssim \| \GRAD \ve \|_{L^2(y^\alpha,\Omega\times(\Y/2,\Y))}^2.
$
We now use this estimate in \eqref{eq:truncenergy} and integrate in time to obtain
\begin{multline*}
  \| \tr(\ve -v)(t) \|_{L^2(\Omega)}^2 + \int_0^t \| \GRAD (\ve - v )(s) \|_{L^2(y^\alpha,\C_\Y)}^2 \diff s   \\ \leq
  \| \GRAD v \|_{L^2(0,T;L^2(y^\alpha,\C_\Y))} \| \GRAD \ve \|_{L^2(0,T;L^2(y^\alpha,\Omega\times(\Y/2,\infty)))}
  \lesssim e^{-\Y/4}\frakD^2,
\end{multline*}
where the last inequality follows from Lemma~\ref{lem:expdecay}, the fact that $\tr (\ve - v)_{|t=0} = 0$ and the stability estimate \eqref{eq:stab_ext_trun} of \eqref{eq:truncgradflow} in terms of $\frakD$. Since $t$ is arbitrary this implies the desired estimate.
\end{proof}

\section{Time discretization}
\label{sec:timediscr}

We now proceed with the time discretization of problem \eqref{eq:obstrong}. We could directly apply a suitable time discretization scheme to either \eqref{eq:gradflow}, \eqref{eq:varineq} or \eqref{eq:gradflowextension} and then argue,  for instance, by using the results of \cite{NSV}. However, with the exponential convergence result of Proposition~\ref{prop:experror} at hand we will consider the time discretization of the truncated problem \eqref{eq:truncgradflow} based on the implicit Euler method.

The discrete scheme computes the sequence $V^\dt \subset \K(\C_\Y)$, an approximation to the solution to problem \eqref{eq:truncgradflow} at each time step. We initialize the scheme by setting 
\begin{equation}
\label{eq:initial}
\tr V^0 = \ue_0, 
\end{equation}
and for $k = 0,\dots \mathcal{K}-1$, let $V^{k+1} \in \K(\C_\Y)$ be such that, for every $\phi \in \K(\C_\Y)$,
\begin{equation}
  \label{eq:semidicsflow}
  \begin{aligned}
  \left( \tr \frac{ \frakd V^{k+1} }\dt, \tr (V^{k+1} - \phi) \right)_{L^2(\Omega)} &+ a_\Y( V^{k+1}, V^{k+1} - \phi ) 
  \\ &\leq
  ( \fe^{k+1}, \tr (V^{k+1} - \phi) )_{\Ldeux} ,
  \end{aligned}
\end{equation}
where $\frakd$ is defined in Section~\ref{sec:notation} and $\fe^{k+1} = \dt^{-1} \int_{t_k}^{t_{k+1}} \fe \diff t \in L^2(\Omega)$. 

Existence and uniqueness of a solution to \eqref{eq:semidicsflow} follows directly from standard arguments on variational inequalities \cite{Brezis,KS}. The approximate solution to problem \eqref{eq:obstrong} is then defined by the sequence $U^\tau  \subset \Hs$ where
\begin{equation}
\label{discrete_Ufd}
U^\tau = \tr V^\tau.
\end{equation}

\begin{remark}[locality]\rm
The main advantage of scheme \eqref{eq:initial}--\eqref{eq:semidicsflow} is its local nature, which mimics that of problem \eqref{eq:gradflowextension}.
\end{remark}

Let us now show the stability of the scheme.

\begin{proposition}[stability]
Assume that $\ue_0 \in \K(\Omega)$ and $\fe \in L^2(0,T;L^2(\Omega))$, then $\hat{U}^\tau \in L^\infty(0,T;\Ldeux)$ and $\bar{V}^\tau \in L^2(0,T;\HL(y^\alpha,\C_\Y))$ uniformly in $\dt$.
\label{prop:stabsemidiscrete}
\end{proposition}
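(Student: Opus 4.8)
The plan is to mimic the classical energy argument for the implicit Euler scheme applied to a gradient flow, i.e. test the discrete variational inequality \eqref{eq:semidicsflow} at step $k+1$ with an admissible $\phi$ that allows the $a_\Y$-term to produce a telescoping quantity, and then sum over $k$. The natural choice is $\phi = V^k$, which is admissible: indeed $V^k \in \K(\C_\Y)$, so $\tr V^k \geq \psi$ a.e.\ in $\Omega$, and this is exactly the constraint defining $\K(\C_\Y)$. With this choice, $V^{k+1}-\phi = \frakd V^{k+1}$, so the bilinear term becomes $a_\Y(V^{k+1},\frakd V^{k+1})$. Using the elementary identity $2 a_\Y(w, w - z) = a_\Y(w,w) - a_\Y(z,z) + a_\Y(w-z,w-z)$ (symmetry and positivity of $a_\Y$), we get
\[
 a_\Y(V^{k+1}, \frakd V^{k+1}) \geq \tfrac12\left( a_\Y(V^{k+1},V^{k+1}) - a_\Y(V^k,V^k) \right),
\]
which telescopes upon summation. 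Analogously, the time-derivative term $\dt^{-1}(\tr \frakd V^{k+1}, \tr \frakd V^{k+1})_{L^2(\Omega)} = \dt^{-1}\|\tr\frakd V^{k+1}\|_{L^2(\Omega)}^2 \geq 0$ is nonnegative, and the right-hand side $(\fe^{k+1},\tr\frakd V^{k+1})_{L^2(\Omega)}$ is estimated, using the trace estimate \eqref{Trace_estimate}, the weighted Poincar\'e inequality \eqref{Poincare_ineq} and Young's inequality, by $\tfrac14 a_\Y(\frakd V^{k+1}, \frakd V^{k+1}) + C\dt\|\fe^{k+1}\|_{L^2(\Omega)}^2$ (with an extra $\dt$ coming from absorbing $\|\tr\frakd V^{k+1}\|_{L^2(\Omega)}^2$, or alternatively one keeps the $L^2(\Omega)$-norm of $\tr\frakd V^{k+1}$ on the left). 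The leftover fraction of $a_\Y(\frakd V^{k+1},\frakd V^{k+1})$ is then absorbed by the positive term produced by the identity above.

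Concretely, I would carry out the steps in this order. First, fix $k$, set $\phi = V^k$ in \eqref{eq:semidicsflow}, apply the algebraic identity for the symmetric form $a_\Y$, and estimate the forcing term; this yields, after multiplying by $2\dt$ and rearranging,
\[
 \|\tr \frakd V^{k+1}\|_{L^2(\Omega)}^2 + a_\Y(V^{k+1},V^{k+1}) - a_\Y(V^k,V^k) \lesssim \dt\,\|\fe^{k+1}\|_{L^2(\Omega)}^2 .
\]
Second, sum this inequality from $k = 0$ to $k = m-1$ for any $1 \leq m \leq \calK$; the $a_\Y$-terms telescope, giving
\[
 \sum_{k=0}^{m-1}\|\tr\frakd V^{k+1}\|_{L^2(\Omega)}^2 + a_\Y(V^m,V^m) \lesssim a_\Y(V^0,V^0) + \dt\sum_{k=0}^{\calK-1}\|\fe^{k+1}\|_{L^2(\Omega)}^2 .
\]
Third, bound the data: $\dt\sum_k \|\fe^{k+1}\|_{L^2(\Omega)}^2 \leq \|\fe\|_{L^2(0,T;L^2(\Omega))}^2$ by Jensen's inequality applied to the averages $\fe^{k+1}$, and $a_\Y(V^0,V^0) = a_\Y(\calH_\alpha\ue_0, \calH_\alpha\ue_0) \lesssim \|\ue_0\|_{\Hs}^2 \lesssim \|\psi\|_{\Hs}^2 + \text{(something)}$; more carefully, since $\ue_0\in\K(\Omega)$ we should take $V^0 = \calH_\alpha\ue_0$ (the truncated harmonic extension of $\ue_0$), so $a_\Y(V^0,V^0)\simeq\|\ue_0\|_{\Hs}^2$, hence the whole right-hand side is $\lesssim \frakD^2$ with $\frakD$ as in \eqref{eq:defoffrakD}. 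Fourth, translate the bound on $a_\Y(V^m,V^m)$ into the $\HL(y^\alpha,\C_\Y)$-norm of $\bar V^\dt$ via the Poincar\'e inequality \eqref{Poincare_ineq}: since $\|\bar V^\dt\|_{L^2(0,T;\HL(y^\alpha,\C_\Y))}^2 = \dt\sum_{k=1}^{\calK} \|V^k\|_{\HL(y^\alpha,\C_\Y)}^2 \lesssim \dt\sum_{k=1}^\calK a_\Y(V^k,V^k) \leq T\max_k a_\Y(V^k,V^k) \lesssim \frakD^2$. Finally, bound $\|\hat U^\dt\|_{L^\infty(0,T;L^2(\Omega))} = \max_k\|\tr V^k\|_{L^2(\Omega)}$: writing $\tr V^m = \tr V^0 + \sum_{k=0}^{m-1}\tr\frakd V^{k+1}$, use the triangle inequality together with $\|\tr V^0\|_{L^2(\Omega)} = \|\ue_0\|_{L^2(\Omega)}$ and $\sum_{k}\|\tr\frakd V^{k+1}\|_{L^2(\Omega)} \leq (\calK\sum_k\|\tr\frakd V^{k+1}\|_{L^2(\Omega)}^2)^{1/2}$. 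This last bound is problematic because $\calK = T/\dt$ blows up; instead one should obtain an $L^\infty$-in-time bound directly from an inequality of the form $\|\tr V^{k+1}\|_{L^2(\Omega)}^2 \leq \|\tr V^k\|_{L^2(\Omega)}^2 + \dt(\ldots)$ derived from a \emph{different} test-function choice.

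The subtlety — which I expect to be the main obstacle — is obtaining the $L^\infty(0,T;L^2(\Omega))$ stability of $\hat U^\dt$, since the clean telescoping argument above naturally controls $a_\Y(V^m,V^m)$ (an $\HL$-type quantity, giving the $L^2(0,T;\HL)$ bound immediately) but the $L^2(\Omega)$-norm of the trace at a single time requires a separate discrete Gronwall-type estimate. The fix is to go back to \eqref{eq:semidicsflow}, this time test with an admissible $\phi$ whose trace is $\psi$ where needed — or, more simply, exploit that $v\mapsto a_\Y$ together with the constraint allows one to choose $\phi = V^{k+1} - (\tr V^{k+1} - \psi)$-type corrections; the cleanest route is: since $\ue_0\in\K(\Omega)$ and the obstacle is time-independent, the constant-in-time extension of a suitable element of $\K(\C_\Y)$ is admissible, so choosing $\phi$ to be such a fixed barrier $\Psi\in\K(\C_\Y)$ gives
\[
 \tfrac12 \frakd \|\tr V^{k+1}\|_{L^2(\Omega)}^2 + \tfrac12\|\tr\frakd V^{k+1}\|_{L^2(\Omega)}^2 + a_\Y(V^{k+1},V^{k+1}-\Psi) \leq (\fe^{k+1},\tr(V^{k+1}-\Psi))_{L^2(\Omega)},
\]
and after bounding the cross-terms using \eqref{Trace_estimate}, \eqref{Poincare_ineq} and Young's inequality and summing, a discrete Gronwall lemma yields $\max_k\|\tr V^k\|_{L^2(\Omega)}^2 \lesssim \frakD^2$. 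Combined with $\|\hat U^\dt\|_{L^\infty(0,T;L^2(\Omega))} = \max_k\|\tr V^k\|_{L^2(\Omega)}$ from Section~\ref{sec:notation}, this completes the proof. Throughout, all hidden constants depend only on $T$, $\Omega$, $s$ and the trace and Poincar\'e constants, hence are independent of $\dt$, which is exactly the uniform-in-$\dt$ claim.
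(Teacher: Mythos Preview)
Your argument is correct, but it takes a detour that the paper avoids. You first test with $\phi = V^k$ to obtain the telescoping estimate on $a_\Y(V^m,V^m)$ (hence the $L^2(0,T;\HL)$ bound), then realize this does not yield $L^\infty(0,T;L^2(\Omega))$ control and switch to a fixed barrier $\Psi\in\K(\C_\Y)$. The paper goes straight to the second idea: it sets $\phi=\calH_\alpha\psi$ once, shifts variables to $W^\dt=V^\dt-\calH_\alpha\psi$, and obtains both bounds from a single energy inequality. Your first step buys the stronger information $\max_k a_\Y(V^k,V^k)\lesssim\frakD^2$, i.e.\ $\bar V^\dt\in L^\infty(0,T;\HL)$, which the paper does not state; but it is not needed for the proposition.

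Two small corrections to your second step. First, in your displayed inequality you dropped the cross term $-(\tr\frakd V^{k+1},\tr\Psi)_{L^2(\Omega)}$ coming from $(\tr\frakd V^{k+1},\tr(V^{k+1}-\Psi))$; shifting to $W^{k+1}=V^{k+1}-\Psi$ (as the paper does) makes this term disappear and the telescoping clean. Second, Gronwall is not actually required: using the trace estimate \eqref{Trace_estimate} you can bound $(\fe^{k+1},\tr W^{k+1})_{L^2(\Omega)}$ by $\tfrac14 a_\Y(W^{k+1},W^{k+1})+C\dt\|\fe^{k+1}\|_{L^2(\Omega)}^2$ and absorb, so that summation directly gives $\max_m\|\tr W^m\|_{L^2(\Omega)}^2+\dt\sum_k a_\Y(W^k,W^k)\lesssim\frakD^2$ without any iteration.
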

\begin{proof}
Set $\phi = \calH_\alpha \psi$ in \eqref{eq:semidicsflow}, where $\calH_\alpha$ is the $\alpha$-harmonic extension operator introduced in \eqref{eq:mathcalH}. Upon denoting $W^\tau = V^\tau - \calH_{\alpha} \psi$ we obtain
\begin{multline*}
  \left( \tr \frac{\frakd W^{k+1} }\tau, \tr W^{k+1} \right)_{\Ldeux} + a_\Y( W^{k+1}, W^{k+1} )
  \leq \\
  (f^{k+1}, \tr W^{k+1} )_\Ldeux + a_\Y( \calH_{\alpha} \psi, W^{k+1} ).
\end{multline*}
The Cauchy Schwartz inequality and summation over $k$ yields the result.
\end{proof}

The error analysis of \eqref{eq:initial}--\eqref{eq:semidicsflow} follows from the general theory presented in \cite{NSV}. To present it we introduce the error
\begin{equation}
\label{eq:E}
  E(v,V^{\tau}) =  \| \tr(v - \hat{V}^\dt ) \|_{L^\infty(0,T;\Ldeux)} + \| \GRAD( v - \bar{V}^\dt)\|_{L^2(0,T;L^2(y^\alpha,\C_\Y))},
\end{equation}
where $\bar{V}^\dt$ and $\hat{V}^\dt$ are defined in Section~\ref{sec:notation}. We also define
\begin{equation}
\label{eq:calE}
 \calE(\ue,U^{\tau}) =  \| \ue - \hat{U}^\dt  \|_{L^\infty(0,T;\Ldeux)} + \| \ue - \bar{U}^\dt \|_{L^2(0,T;\Hs)}.
\end{equation}

\begin{corollary}[error estimates in time I]
\label{col:semiimpl}
If $\ue_0 \in \K(\Omega)$ and $\fe \in L^2(0,T;\Ldeux)$, then the solutions $v$ of \eqref{eq:truncgradflow} and $V^{\tau}$ of \eqref{eq:initial}--\eqref{eq:semidicsflow} satisfy the uniform estimate
\[
  E(v,V^{\tau}) \lesssim \dt^{1/2}\left( \| \ue_0 \|_{\Hs} + \| \fe \|_{L^2(0,T;\Ldeux)} \right).
\]
If, on the other hand, we have that $\ue_0 \in \K(\Omega) \cap \polH^{2s}(\Omega)$, $\fe \in BV(0,T;\Ldeux)$ and $\fe^k = \fe_+(t_k)$, then
\[
  E(v,V^{\tau}) \lesssim \dt (\left\| \fe_{+}(0) - \Laps \ue_0 \right\|_{L^2(\Omega)} + \Var_{L^2(\Omega)} \fe  ).
\]
In these estimates the hidden constants do not depend on $v$, $V^{\tau}$ nor the problem data.
\end{corollary}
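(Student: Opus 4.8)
The plan is to obtain both estimates by invoking the abstract error analysis for implicit-Euler discretizations of evolution variational inequalities governed by a maximal monotone operator, as developed in \cite{NSV}, applied to the truncated problem \eqref{eq:truncgradflow} and its discretization \eqref{eq:initial}--\eqref{eq:semidicsflow}. The key observation is that \eqref{eq:truncgradflow} is exactly of the form treated in \S\ref{sub:fracobstacle}: it is the gradient flow, in the $L^2(\Omega)$-metric transported to the cylinder via the trace, for the energy $J_\Y(\phi) = \tfrac12 a_\Y(\phi,\phi) + \Ind_{\K(\C_\Y)}(\phi)$, and \eqref{eq:semidicsflow} is its backward Euler discretization. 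Hence the first step is to state precisely the two abstract convergence results we need: one valid under the mere assumption $\ue_0 \in \K(\C_\Y)$ (equivalently, after $\alpha$-harmonic extension, $\ue_0 \in \K(\Omega)$) and $\fe \in L^2(0,T;\Ldeux)$, yielding the order-$\dt^{1/2}$ rate; and the sharper order-$\dt$ result requiring $\ue_0 \in \calD(\partial J_\Y)$ and $\fe \in BV(0,T;\Ldeux)$.

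Next I would translate the abstract hypotheses into the concrete regularity statements appearing in the corollary. For the $\dt^{1/2}$ bound, the right-hand side of the abstract estimate is controlled by $\|\ue_0\|_{\Hs}$ — which by \eqref{Trace_estimate} and the stability of $\calH_\alpha$ is equivalent to $\|\calH_\alpha\ue_0\|_{\HLn(y^\alpha,\C_\Y)}$, the natural ``energy norm'' of the initial datum — together with $\|\fe\|_{L^2(0,T;\Ldeux)}$; this is immediate from the abstract theory since the initial energy $J_\Y(\calH_\alpha\ue_0)$ and the $L^2$-in-time norm of the forcing are exactly the quantities that enter. For the $\dt$ bound, the point is to identify $\calD(\partial J_\Y)$: writing $\partial J_\Y$ explicitly via the complementarity system, $\ue_0 \in \calD(\partial J_\Y)$ amounts to requiring that the element of minimal norm in $\partial J_\Y(\ue_0)$ — which is $\Laps\ue_0 - \fe_+(0)$ projected onto the admissible cone — lie in $L^2(\Omega)$; the clean sufficient condition is $\ue_0 \in \K(\Omega)\cap\polH^{2s}(\Omega)$, so that $\Laps\ue_0 \in L^2(\Omega)$, whence the abstract residual term becomes $\|\fe_+(0)-\Laps\ue_0\|_{L^2(\Omega)}$, and the choice $\fe^k = \fe_+(t_k)$ together with $\fe\in BV(0,T;\Ldeux)$ supplies the $\Var_{L^2(\Omega)}\fe$ contribution.

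The main obstacle, I expect, is not the abstract machinery but the bookkeeping needed to make the identification $E(v,V^\tau)\lesssim(\text{abstract error})$ with the \emph{precise} norms in \eqref{eq:E}: the abstract theory typically delivers control of $\|\tr(v-\hat V^\dt)\|_{L^\infty(0,T;\Ldeux)}$ and $\|v-\bar V^\dt\|_{L^2(0,T;\HLn(y^\alpha,\C_\Y))}$ in the ambient energy norm, and one must invoke the weighted Poincar\'e inequality \eqref{Poincare_ineq} to replace the full $\HL$-norm by the seminorm $\|\GRAD(\cdot)\|_{L^2(y^\alpha,\C_\Y)}$ appearing in $E$. A secondary technical point is checking that the test-function manipulations in \eqref{eq:semidicsflow} are compatible with the abstract framework of \cite{NSV} — in particular that the bilinear form $a_\Y$ is symmetric, bounded, and coercive on $\HL(y^\alpha,\C_\Y)$ (coercivity being precisely \eqref{Poincare_ineq}), so that $J_\Y$ is proper, convex, lower semicontinuous with the requisite quadratic-plus-indicator structure. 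Once these identifications are in place, both displayed inequalities follow by directly quoting the two regimes of the abstract theorem, and the uniformity in $\dt$ of the hidden constants is inherited from the abstract statement together with the $\dt$-independence of the stability bound \eqref{eq:stab_ext_trun}.
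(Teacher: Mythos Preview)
Your proposal is correct and follows exactly the paper's approach: the paper's entire proof consists of the single line ``See Theorem~3.16 and Theorem~3.20 of \cite{NSV},'' so the corollary is obtained by direct citation of the abstract error analysis for implicit Euler discretizations of gradient flows for maximal monotone operators. Your additional bookkeeping (identifying $J_\Y$, checking coercivity via \eqref{Poincare_ineq}, translating $\calD(\partial J_\Y)$ into $\ue_0\in\K(\Omega)\cap\polH^{2s}(\Omega)$, and matching the norms in $E$) is precisely what is implicit in that citation and is carried out correctly.
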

\begin{proof}
See Theorem~3.16 and Theorem~3.20 of \cite{NSV}.
\end{proof}

The following result combines Corollary~\ref{col:semiimpl}, the Caffarelli-Silvestre result \cite{CS:07}, the trace estimate \eqref{Trace_estimate} and the exponential error estimate of Proposition \ref{prop:experror}.

\begin{corollary}[error estimates in time II]
\label{col:semiimplII}
Assume that $\ue_0 \in \K(\Omega)$ and $\fe \in L^2(0,T;\Ldeux)$, then the solution $\ue$ of \eqref{eq:varineq} and the approximation $U^{\tau}$ defined by \eqref{discrete_Ufd} satisfy the uniform estimate
\[
  \calE(\ue,U^{\tau})\lesssim \dt^{1/2}\left( \| \ue_0 \|_{\Hs} + \| \fe \|_{L^2(0,T;\Ldeux)} \right) + e^{-\Y/8} \frakD.
\]
If we have that $\ue_0 \in \K(\Omega) \cap \polH^{2s}(\Omega)$, $\fe \in BV(0,T;\Ldeux)$ and $\fe^k = \fe_+(t_k)$, then
\[
  \calE(\ue,U^{\tau}) \lesssim \dt (\left\| \fe_{+}(0) - \Laps \ue_0 \right\|_{L^2(\Omega)} + \Var_{L^2(\Omega)} \fe  ) + e^{-\Y/8} \frakD,
\]
where in both estimates the hidden constants depend solely on the problem data.
\end{corollary}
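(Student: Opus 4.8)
The plan is to combine three ingredients, each already available: the time-discretization error estimates for the truncated problem from Corollary~\ref{col:semiimpl}, the exponential truncation error of Proposition~\ref{prop:experror}, and the trace estimate \eqref{Trace_estimate}, which lets us pass from estimates on the cylinder $\C_\Y$ to estimates on $\Omega$ for the fractional variable. The key observation is that the quantity $\calE(\ue,U^\tau)$ from \eqref{eq:calE} can be bounded by the corresponding cylinder quantities via the triangle inequality, inserting the solution $v$ of the truncated problem \eqref{eq:truncgradflow} as an intermediate term.

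First I would write, using \eqref{discrete_Ufd} and the fact that $\ue=\tr\ve$, the decomposition
\[
  \ue - \hat{U}^\dt = \tr(\ve - v) + \tr(v - \hat{V}^\dt),
\qquad
  \ue - \bar{U}^\dt = \tr(\ve - v) + \tr(v - \bar{V}^\dt),
\]
so that, taking the relevant norms,
\[
  \calE(\ue,U^\tau) \lesssim \|\tr(\ve-v)\|_{L^\infty(0,T;\Ldeux)} + \|\tr(\ve-v)\|_{L^2(0,T;\Hs)} + \|\tr(v-\hat V^\dt)\|_{L^\infty(0,T;\Ldeux)} + \|\tr(v-\bar V^\dt)\|_{L^2(0,T;\Hs)}.
\]
For the last two terms I apply \eqref{Trace_estimate} pointwise in $t$ and integrate (respectively take the supremum) in time; since the seminorm $\|\GRAD\cdot\|_{L^2(y^\alpha,\C_\Y)}$ controls the $\HLn(y^\alpha,\C_\Y)$-norm via the weighted Poincar\'e inequality \eqref{Poincare_ineq}, these are bounded by $E(v,V^\tau)$ as defined in \eqref{eq:E}, to which Corollary~\ref{col:semiimpl} applies directly. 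This yields either $\dt^{1/2}(\|\ue_0\|_{\Hs}+\|\fe\|_{L^2(0,T;\Ldeux)})$ or $\dt(\|\fe_+(0)-\Laps\ue_0\|_{L^2(\Omega)}+\Var_{L^2(\Omega)}\fe)$ depending on the regularity hypothesis, exactly the stated $\dt$-rates.

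For the first two (truncation) terms, the $L^\infty(0,T;\Ldeux)$ piece is immediately controlled by Proposition~\ref{prop:experror}, giving $e^{-\Y/8}\frakD$ after taking square roots. The $L^2(0,T;\Hs)$ piece requires applying the trace estimate \eqref{Trace_estimate} in time and then Proposition~\ref{prop:experror} again; the point is that $\|\tr(\ve-v)\|_{L^2(0,T;\Hs)} \lesssim \|\GRAD(\ve-v)\|_{L^2(0,T;L^2(y^\alpha,\C_\Y))} \lesssim e^{-\Y/8}\frakD$, where the exponent $1/8$ comes from halving the $e^{-\Y/4}$ of Proposition~\ref{prop:experror} when extracting the square root. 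Collecting the four contributions gives the claimed bounds.

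The step requiring the most care is the bookkeeping of the exponential rate: Proposition~\ref{prop:experror} is stated for the squared error with rate $e^{-\Y/4}$, and $E(v,V^\tau)$ and $\calE$ are defined with unsquared norms, so one must consistently pass to square roots to land on $e^{-\Y/8}$; one should also check that $\ve$ extended by zero is admissible so that Proposition~\ref{prop:experror} indeed applies with the hypotheses $\ue_0\in\K(\Omega)$, $\psi\in\Hs\cap C(\bar\Omega)$, $\fe\in L^2(0,T;\Ldeux)$ in force here. Otherwise the argument is a routine chaining of the cited results, and I would simply remark that the hidden constants depend only on the problem data because those of Corollary~\ref{col:semiimpl}, Proposition~\ref{prop:experror}, and \eqref{Trace_estimate} do.
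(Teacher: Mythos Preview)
Your argument is correct and follows essentially the same route as the paper: both combine Corollary~\ref{col:semiimpl}, Proposition~\ref{prop:experror}, and the trace estimate \eqref{Trace_estimate} via a triangle-inequality decomposition through the truncated solution $v$, with the paper phrasing the split as $\calE(\ue,U^\tau)\lesssim E(\ve,V^\tau)$ followed by sublinearity of $E$ in its first argument. One small slip in your parenthetical remark: in the proof of Proposition~\ref{prop:experror} it is the truncated solution $v$ that is extended by zero into $\C$, not $\ve$.
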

\begin{proof}
The definition of $\calE(\ue,U^\tau)$, given in \eqref{eq:calE}, the Caffarelli-Silvestre extension result $\tr \ve = \ue$ \cite{CS:07,NOS} and estimate \eqref{Trace_estimate} yield $\calE(\ue,U^\tau) \lesssim E(\ve,V^\tau)$, where $E$ is defined in \eqref{eq:E}. Notice now that $E$ is sublinear in its first argument, so that
\[
  \calE(\ue,U^\tau) \lesssim E(v,V^\tau) + \| \tr(\ve - v)  \|_{L^\infty(0,T;\Ldeux)} + \| \ve - v \|_{L^2(0,T;L^2(y^{\alpha},\C_\Y))},
\]
The result now follows from combining Corollary~\ref{col:semiimpl} and Proposition~\ref{prop:experror}.
\end{proof}

\section{Space discretization}
\label{sec:space}

The results of previous sections are important in two aspects. First, we were able to replace the original parabolic fractional obstacle problem \eqref{eq:obstrong} (or any of its variants discussed in \S\ref{sub:fracobstacle}) by an equivalent one that involves a local operator \eqref{eq:gradflowextension} and is posed on the semi-infinite cylinder $\C$. Then, we considered a truncated version \eqref{eq:truncgradflow} of our problem, that is posed on the \emph{bounded domain} $\C_{\Y}$, while just incurring in an exponentially small error in the process. This is important because we shall discretize in space using first-degree tensor product finite elements. Section~\ref{sec:timediscr} presents a first order discretization in time and applies the general theory of discretizations of nonlinear evolution equations \cite{NSV} to provide an error analysis.

It remains then to discretize in space and to study its effect. We will follow \cite{NOS4,NOS3,NOS}, where it is shown that $\calU$, solution of \eqref{eq:extension}, possesses a singularity as $y\downarrow0$, so that the use of anisotropic meshes in the extended direction $y$ is imperative if one wishes to obtain a quasi-optimal approximation error. The latter combines asymptotic properties of Bessel functions with polynomial interpolation theory on weighted Sobolev spaces \cite{NOS2}, which is valid for tensor product elements that exhibit a large aspect ratio in $y$. These references also show how to exploit the tensor product structure of $\C_\Y$ to design such a mesh. For convenience we recall this construction.

Let $\T_\Omega = \{K\}$ be a conforming and shape regular triangulation of $\Omega$ into cells $K$ that are isoparametrically equivalent to either a simplex or a cube \cite{CiarletBook,Guermond-Ern}. We denote by $\sigma_{\Omega}$ the shape regularity constant of $\T_\Omega$. Let $\calI_\Y = \{I\}$ be a partition of $[0,\Y]$ with mesh points
\begin{equation}
\label{eq:gradedmesh}
  y_j = \left( \frac{j}M \right)^\gamma \Y, \quad j = 0,\ldots,M, \quad \gamma > \frac3{1-\alpha}=\frac3{2s}>1.
\end{equation}
We then construct a mesh of the cylinder $\C_\Y$ by $\T_\Y = \T_\Omega \otimes \calI_\Y$, \ie each cell $T \in \T_\Y$ is of the form $T = K \times I$ where $K \in \T_\Omega$ and $I \in \calI_\Y$. Notice that, by construction, $\# \T_\Y = M \#\T_\Omega$. When $\T_\Omega$ is quasiuniform with $\# \T_\Omega \approx M^d$ we have $\# \T_\Y \approx M^{d+1}$ and, if $h_{\T_\Omega} = \max \{ \diam(K) : K \in \T_\Omega \}$, then $M \approx h_{\T_\Omega}^{-1}$.

Having constructed the mesh $\T_\Y$ we define the finite element space
\begin{equation*}
\label{eq:defofFE}
  \polV(\T_\Y) := \left\{ W \in C^0(\bar\C_\Y): W_{|T} \in \calP(K) \otimes \polP_1(I) \ \forall T \in \T_\Y, \ W_{|\Gamma_D} =0 \right\},
\end{equation*}
where, if $K$ is isoparametrically equivalent to a simplex, $\calP(K)=\polP_1(K)$ \ie the set of polynomials of degree at most one.
If $K$ is a cube $\calP(K) = \polQ_1(K)$, that is, the set of polynomials of degree at most one in each variable.

We remark that, owing to \eqref{eq:gradedmesh}, the meshes $\T_\Y$ are not shape regular but satisfy: if $T_1 = K_1 \times I_1$ and $T_2 = K_2 \times I_2$ are neighbors, then there is $\sigma>0$ such that
\[
 h_{I_1} \leq \sigma h_{I_2}, \qquad h_I = |I|.
\]
While this is crucial to capture the singularities present in the solution, it also requires the development of anisotropic error estimates on Muckenhoupt weighted Sobolev spaces as detailed in \cite{NOS,NOS2}.

\subsection{Fully discrete scheme}
\label{sub:descscheme}
To describe and analyze the fully discrete scheme we must introduce an interpolation operator that preserves positivity of traces. In what follows we assume that there is an operator $\Pi_{\T_\Y}: L^1(\C_\Y) \to \polV(\T_\Y)$ that verifies:
\begin{enumerate}[$\bullet$]
  \item \emph{Locality}. If $w$ is such that $\tr w$ makes sense and $\vero'$ is a vertex in $\T_\Omega$, then $\Pi_{\T_\Y} w(\vero',0)$ depends only on the values of $w$ in an $\Omega$-neighborhood of $\vero'$.
  
  \item \emph{Positivity preserving}. If $w$ is such that $\tr w$ makes sense then
  \begin{equation}
  \label{eq:Pipositive}
    \tr w \geq 0 \implies \Pi_{\T_\Y} w \geq 0.
  \end{equation}
  
  \item \emph{Stability}. If $w$ is $\alpha$-harmonic and $w \in H^2(y^\beta,\C_\Y)\cap \HL(y^\alpha,\C_\Y)$ then 
  \begin{equation}
  \label{eq:Pistable}
    \| \Pi_{\T_\Y} w \|_{\HLn(y^\alpha,\C_\Y)} \lesssim \| w \|_{H^2(y^\beta, \C_\Y)} + \| w \|_{\HLn(y^\alpha,\C_\Y)}
  \end{equation}
  
  \item \emph{Approximation}. If $w$ is $\alpha$-harmonic and $w \in H^2(y^\beta,\C_\Y)\cap \HL(y^\alpha,\C_\Y)$ then 
  \begin{equation}
  \label{eq:Piapprox}
    \lim_{\# \T_\Y \to \infty} \| w - \Pi_{\T_\Y} w \|_{\HLn(y^\alpha,\C_\Y)} = 0.
  \end{equation}
  
  \item \emph{Superapproximation}. For $\delta \in [0,2]$ we have that if $\tr w \in \polH^\delta(\Omega)$, then
  \begin{equation}
  \label{eq:Pisuperapprox}
    \| \tr( w - \Pi_{\T_\Y} w ) \|_{\Ldeux} \lesssim h_{\T_\Omega}^\delta \| w \|_{\polH^\delta(\Omega)},
  \end{equation}
  where the hidden constant is independent of $\delta$, $\T_\Omega$ and $w$.
  
\end{enumerate}
An example of such a construction is presented in Section~\ref{sub:interpolant}.

Let us now describe the scheme. We define
\begin{equation*}
\label{eq:defofKh}
\K(\T_\Y) = \left\{ W \in \polV(\T_\Y) : \tr W \geq \tr \Pi_{\T_\Y} \calH_\alpha \psi \ \mae \Omega \right\},
\end{equation*}
where $\calH_\alpha$ is the $\alpha$-harmonic extension operator introduced in \eqref{eq:mathcalH}.
The fully discrete scheme computes the sequence $V^{\tau}_{\T_{\Y}} \subset \K(\T_\Y)$, an approximation of the solution to \eqref{eq:truncgradflow} at each time step. We initialize the scheme by setting 
\begin{equation}
\label{eq:fdinit}
V_{\T_\Y}^0 = \Pi_{\T_\Y} \calH_\alpha \ue_0.
\end{equation}
For $k=0,\cdots,\calK-1$, $V_{\T_\Y}^{k+1} \in \K(\T_\Y)$ solves
\begin{multline}
\label{eq:fdscheme}
  \left( \tr \frac{\frakd V_{\T_\Y}^{k+1}}\dt, \tr(V_{\T_\Y}^{k+1} - W) \right)_{L^2(\Omega)}  + a_\Y(V_{\T_\Y}^{k+1}, V_{\T_\Y}^{k+1} - W)
  \\
  \leq \left( \fe^{k+1}, \tr(V_{\T_\Y}^{k+1}- W) \right)_{L^2(\Omega)} \qquad \forall W \in \K(\T_\Y) .
\end{multline}
Standard results on variational inequalities yield existence and uniqueness of $V_{\T_\Y}^{k+1}$ for $k=1,\dots,\calK-1$. To obtain an approximate solution to the parabolic fractional obstacle problem \eqref{eq:obstrong}, we define the sequence $U^{\tau}_{\T_{\Omega}} \subset \Hs $ by
$U^{\tau}_{\T_{\Omega}} = \tr V^{\tau}_{\T_{\Y}}
$.

\begin{remark}[properties of the scheme]\rm
The main advantage of \eqref{eq:fdscheme}--\eqref{eq:fdinit} is that provides an approximate solution to the fractional obstacle problem \eqref{eq:obstrong} based on solving the local evolution variational inequality \eqref{eq:fdscheme}. Its implementation is simple and requires standard components of a finite element algorithm.
\end{remark}

We note that, if $\ue_0 \in \polH^{1+s}(\Omega)$, the continuity \eqref{eq:Pistable} of the operator $\Pi_{\T_{\Y}}$ implies
\begin{equation}
\label{eq:IDatabdd}
  \| \GRAD V_{\T_{\Y}}^0 \|_{L^2(y^\alpha,\C_\Y)} \lesssim \| \ue_0 \|_{\polH^{1+s}(\Omega)}.
\end{equation}
Indeed, the regularity results of \cite[Theorem 2.7]{NOS} show that, if $w \in \polH^{1+s}(\Omega)$, then $\partial_{yy} \calH_\alpha w \in L^2(y^\beta,\C_\Y)$ and $\nabla \nabla_{x'} \calH_\alpha w \in L^2(y^\alpha,\C_\Y)$; see also \eqref{eq:reg_estimates}.

Let us now present an error analysis for \eqref{eq:fdscheme}. We will do so under different assumptions on the problem data, thus obtaining different rates according to the smoothness properties of the solution. We introduce the errors
\begin{equation}
\label{e}
e^\dt = e^\dt(V^{\dt},V_{\T_{\Y}}^{\tau}) = V^\dt - V_{\T_{\Y}}^{\tau},
\end{equation}
where $V^{\dt}$ and $V_{\T_{\Y}}^{\tau}$ solve the problems \eqref{eq:initial}--\eqref{eq:semidicsflow} and \eqref{eq:fdinit}--\eqref{eq:fdscheme} respectively, and 
\begin{equation}
\label{vare}
\vare^\dt = \vare^\dt(V^{\tau}) = V^\dt - \Pi_{\T_\Y} V^\dt.
\end{equation}

\subsection{Analysis with minimal regularity}
\label{sub:minreg}
Here we only assume that the right hand side satisfies $\fe \in L^2(0,T;\Ldeux)$, $\psi \in \Hs$ and $\psi \leq \ue_0 \in \polH^{1+s}(\Omega)$. As a first result we obtain an a priori estimate for $V^\dt_{\T_\Y}$.

\begin{lemma}[a priori estimates on $V^\dt_{\T_\Y}$]
\label{lem:aprioriV}
If $\fe \in L^2(0,T;\Ldeux)$, $\psi \in \Hs$ and $\psi \leq \ue_0 \in \polH^{1+s}(\Omega)$, then the sequence $V^\dt_{\T_\Y}$, solution to \eqref{eq:fdinit}--\eqref{eq:fdscheme}, satisfies
\begin{multline*}
  \sum_{k=1}^{\calK} \| \tr \frakd V_{\T_{\Y}}^k \|_{\Ldeux}^2 + \dt \| \GRAD V_{\T_{\Y}}^{\calK} \|_{L^2(y^\alpha,\C_\Y)}^2 
  + \dt \sum_{k=1}^{\calK} \|\GRAD \frakd V_{\T_{\Y}}^{k} \|_{L^2(y^\alpha,\C_\Y)}^2 \\
  \lesssim \dt \left[ \| \fe \|_{L^2(0,T;\Ldeux)}^2 + \| \ue_0\|_{\polH^{1+s}(\Omega)}^2 \right],
\end{multline*}
where the hidden constant does not depend on neither $v$, $V_{\T_{\Y}}^\dt$, nor the problem data.
\end{lemma}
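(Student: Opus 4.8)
The plan is to test the fully discrete variational inequality \eqref{eq:fdscheme} with the discrete-difference test function $W = V_{\T_\Y}^{k+1} - \frakd V_{\T_\Y}^{k+1} = V_{\T_\Y}^k$, which is admissible since $\K(\T_\Y)$ is a translate of a cone and $V_{\T_\Y}^k \in \K(\T_\Y)$. Then $\tr(V_{\T_\Y}^{k+1} - W) = \tr \frakd V_{\T_\Y}^{k+1}$ and the first term becomes $\dt^{-1}\|\tr \frakd V_{\T_\Y}^{k+1}\|_{\Ldeux}^2 \ge 0$, while the elliptic term becomes $a_\Y(V_{\T_\Y}^{k+1}, \frakd V_{\T_\Y}^{k+1})$. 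For the quadratic form I would invoke the elementary identity $a_\Y(V^{k+1}, \frakd V^{k+1}) = \tfrac12\frakd \, a_\Y(V^{k+1},V^{k+1}) + \tfrac12 a_\Y(\frakd V^{k+1}, \frakd V^{k+1})$ (the discrete analogue of $v'v = \tfrac12 (v^2)'$), so that after multiplying by $\dt$ and summing over $k=0,\dots,\calK-1$ the $a_\Y$ part telescopes, producing $\tfrac12 a_\Y(V_{\T_\Y}^{\calK},V_{\T_\Y}^{\calK}) - \tfrac12 a_\Y(V_{\T_\Y}^0, V_{\T_\Y}^0)$ plus $\tfrac12\sum_k a_\Y(\frakd V^{k+1},\frakd V^{k+1})$, and on the right-hand side the forcing term $\sum_k (\fe^{k+1},\tr\frakd V_{\T_\Y}^{k+1})_\Ldeux$.

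Next I would estimate the right-hand side: by Cauchy--Schwarz in $\Ldeux$ and Young's inequality, $\dt\sum_k (\fe^{k+1},\tr \frakd V_{\T_\Y}^{k+1})_\Ldeux \le \tfrac12 \sum_k \|\tr \frakd V_{\T_\Y}^{k+1}\|_\Ldeux^2 + \tfrac12 \dt^2 \sum_k \|\fe^{k+1}\|_\Ldeux^2$. The first piece is absorbed into the $\dt^{-1}\|\tr\frakd V^{k+1}\|_\Ldeux^2$ terms carried over from the left (there is room since those came with coefficient $1$ after multiplying by $\dt$ — here one should be slightly careful with constants, but absorption works by taking, say, half of it). For the forcing term one uses $\dt\sum_{k}\|\fe^{k+1}\|_\Ldeux^2 \le \dt^{-1}\|\fe\|_{L^2(0,T;\Ldeux)}^2$... more precisely, since $\fe^{k+1}$ is the time average of $\fe$ over $(t_k,t_{k+1})$, Jensen gives $\dt\|\fe^{k+1}\|_\Ldeux^2 \le \|\fe\|_{L^2(t_k,t_{k+1};\Ldeux)}^2$, so $\dt^2\sum_k\|\fe^{k+1}\|_\Ldeux^2 \le \dt\|\fe\|_{L^2(0,T;\Ldeux)}^2$, which is exactly the scaling in the claimed bound.

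The remaining term to handle is the initial datum contribution $\tfrac12 a_\Y(V_{\T_\Y}^0,V_{\T_\Y}^0) \simeq \|\GRAD V_{\T_\Y}^0\|_{L^2(y^\alpha,\C_\Y)}^2$. This is controlled precisely by the stated hypothesis $\ue_0 \in \polH^{1+s}(\Omega)$ via estimate \eqref{eq:IDatabdd}, i.e.\ $\|\GRAD V_{\T_\Y}^0\|_{L^2(y^\alpha,\C_\Y)} \lesssim \|\ue_0\|_{\polH^{1+s}(\Omega)}$, which in turn rests on \eqref{eq:reg_estimates} and the stability \eqref{eq:Pistable} of $\Pi_{\T_\Y}$. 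Since $V_{\T_\Y}^0 = \Pi_{\T_\Y}\calH_\alpha \ue_0$ and $\calH_\alpha \ue_0$ is $\alpha$-harmonic and, by the regularity estimate, lies in $H^2(y^\beta,\C_\Y)\cap\HL(y^\alpha,\C_\Y)$ with norm bounded by $\|\ue_0\|_{\polH^{1+s}(\Omega)}$, the hypotheses of \eqref{eq:Pistable} are met. Putting the three pieces together — the telescoped $a_\Y$ norms, the absorbed discrete-time-derivative terms, and the $\sum_k a_\Y(\frakd V^{k+1},\frakd V^{k+1})$ which also survives on the left with coefficient $\tfrac{\dt}{2}$ — yields exactly the asserted inequality after using the Poincar\'e inequality \eqref{Poincare_ineq} (equivalently, the equivalence of the $a_\Y$-seminorm with the $\HLn(y^\alpha,\C_\Y)$-norm) to replace full gradient norms by the weighted seminorms appearing in the statement.

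The only mildly delicate point — and the one I expect to be the main obstacle — is the bookkeeping in the absorption step: one must be sure that the coefficient in front of $\sum_k\|\tr\frakd V_{\T_\Y}^{k+1}\|_\Ldeux^2$ produced on the left (which is $1$ after multiplying \eqref{eq:fdscheme} by $\dt$) strictly dominates the $\tfrac12$ generated by Young's inequality on the forcing term, so that a positive multiple of it remains; this is immediate here. A secondary subtlety is justifying that $V_{\T_\Y}^k$ is an admissible test function in the inequality for $V_{\T_\Y}^{k+1}$ — true because all iterates belong to the fixed convex set $\K(\T_\Y)$ — and noting that since $V_{\T_\Y}^0\in\K(\T_\Y)$ as well (its trace equals $\tr\Pi_{\T_\Y}\calH_\alpha\ue_0 \ge \tr\Pi_{\T_\Y}\calH_\alpha\psi$ by positivity-preservation of $\Pi_{\T_\Y}$ applied to $\ue_0-\psi\ge 0$), the telescoping starts cleanly at $k=0$. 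Everything else is the standard energy argument for backward Euler applied to a variational inequality, and no regularity beyond $\ue_0\in\polH^{1+s}(\Omega)$ and $\fe\in L^2(0,T;\Ldeux)$ is needed.
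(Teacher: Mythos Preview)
Your proof is correct and follows essentially the same route as the paper: test \eqref{eq:fdscheme} with $W=V_{\T_\Y}^k$, use the polarization identity $2a(a-b)=a^2-b^2+(a-b)^2$ (equivalently your discrete product rule for $a_\Y$), apply Young's inequality to the forcing term, telescope, and bound $\|\GRAD V_{\T_\Y}^0\|_{L^2(y^\alpha,\C_\Y)}$ via \eqref{eq:IDatabdd}. Your extra remarks on admissibility of $V_{\T_\Y}^k$ and on $V_{\T_\Y}^0\in\K(\T_\Y)$ are welcome elaborations, though note that $\K(\T_\Y)$ is a translate of a convex set rather than of a cone---what you actually use (and all that is needed) is that each $V_{\T_\Y}^k$ lies in $\K(\T_\Y)$.
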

\begin{proof}
Set $W = V_{\T_{\Y}}^k$ in \eqref{eq:fdscheme} and multiply the obtained result by $\dt$. Using the identity $2a(a-b) = a^2 - b^2 +(a-b)^2$, with $a,b \in \Real$, we derive
\begin{multline*}
  \| \tr \frakd V_{\T_{\Y}}^{k+1} \|_{\Ldeux}^2 + 
  \frac{\dt}{2\ds} \left[ \frakd\|\GRAD V_{\T_{\Y}}^{k+1} \|_{L^2(y^\alpha,\C_\Y)}^2 + \|\GRAD \frakd V_{\T_{\Y}}^{k+1} \|_{L^2(y^\alpha,\C_\Y)}^2 \right] \leq \\
  \dt \| \fe^{k+1} \|_{\Ldeux} \| \tr \frakd V_{\T_{\Y}}^{k+1} \|_{\Ldeux} \leq
  \frac12 \dt^2 \| \fe^{k+1} \|_{\Ldeux}^2 + \frac12 \| \tr \frakd V_{\T_{\Y}}^{k+1} \|_{\Ldeux}^2.
\end{multline*}
Adding this inequality over $k=0,\ldots,\calK-1$ yields
\begin{multline*}
\frac12\sum_{k=1}^{\calK} \| \tr \frakd V_{\T_{\Y}}^k \|_{\Ldeux}^2 + \frac\dt2 \|\GRAD V_{\T_{\Y}}^{\calK} \|_{L^2(y^\alpha,\C_\Y)}^2 + \frac\dt2 \sum_{k=1}^\calK \|\GRAD \frakd V_{\T_{\Y}}^k \|_{L^2(y^\alpha,\C_\Y)}^2 \lesssim \\ \frac\dt2 \left[ \| \fe \|_{L^2(0,T;\Ldeux)}^2 + \|\GRAD V_{\T_{\Y}}^0\|_{L^2(y^\alpha,\C_\Y)}^2 \right].
\end{multline*}
The assumptions on $\fe$ and $\ue_0$ imply, in light of \eqref{eq:IDatabdd}, the asserted estimate.
\end{proof}

With these a priori estimates we can provide a first error analysis. Notice that by means of the change of variable $\ue \leftarrow \ue - \psi$ one can assume that the obstacle $\psi \equiv 0$. In this case then we have that $\K(\T_\Y) \subset \K(\C_\Y)$.

\begin{theorem}[error analysis with minimal regularity]
\label{thm:firstapriori}
If $\fe \in L^2(0,T;\Ldeux)$, $\psi \equiv 0$ and $0 \leq \ue_0 \in \polH^{1+s}(\Omega)$, then
\begin{multline*}
  \| \tr \hat{e}^\dt \|_{L^\infty(0,T;\Ldeux)}^2 + \| \GRAD \bar{e}^\dt \|_{L^2(0,T;L^2(y^\alpha,\C_\Y))}^2 \lesssim \\
  \| \tr e^0 \|_{\Ldeux}^2 + 
  \left\| \GRAD \bar{\vare}^\dt \right\|_{L^2(0,T;L^2(y^\alpha,\C_\Y))},
\end{multline*}
where the hidden constant depends only on $\|\fe\|_{L^2(0,T;\Ldeux)}$ and $\| \ue_0 \|_{\polH^{1+s}(\Omega)}$.
\end{theorem}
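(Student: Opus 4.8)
The plan is to derive an energy identity for the fully discrete error $e^\dt = V^\dt - V^\dt_{\T_\Y}$ by testing the two variational inequalities \eqref{eq:semidicsflow} and \eqref{eq:fdscheme} against suitable competitors and subtracting. Since we may assume $\psi\equiv0$ (so $\K(\T_\Y)\subset\K(\C_\Y)$), I would at time step $k+1$ take $\phi = V^{k+1}_{\T_\Y}\in\K(\C_\Y)$ in \eqref{eq:semidicsflow}, which is legitimate. In \eqref{eq:fdscheme} I would take $W = \Pi_{\T_\Y}V^{k+1}\in\K(\T_\Y)$, which is legitimate thanks to the positivity-preserving property \eqref{eq:Pipositive} of $\Pi_{\T_\Y}$ (the trace of $V^{k+1}$ is nonnegative since $\psi\equiv0$). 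Adding the two resulting inequalities and inserting $\pm\Pi_{\T_\Y}V^{k+1}$ cancels the mixed terms up to the interpolation error $\vare^\dt = V^\dt - \Pi_{\T_\Y}V^\dt$; after rearranging one obtains, for the error $\frakd e^{k+1}$ in the time-difference and for $a_\Y(e^{k+1},e^{k+1})$, a bound whose right-hand side only involves $a_\Y(e^{k+1},\vare^{k+1})$ (and possibly a term pairing $\tr\frakd e^{k+1}$ against $\tr\vare^{k+1}$, or its discrete-in-time counterpart).

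Next I would use the algebraic identity $2a(a-b) = a^2 - b^2 + (a-b)^2$ to handle the bilinear form, exactly as in the proof of Lemma~\ref{lem:aprioriV}: the term $a_\Y(e^{k+1}, \frakd e^{k+1})$ produces $\tfrac12\bigl[\frakd\|\GRAD e^{k+1}\|^2 + \|\GRAD\frakd e^{k+1}\|^2\bigr]$ up to the constant $\ds$, while $\|\tr\frakd e^{k+1}\|_\Ldeux^2$ appears with a good sign from the time-difference term. The cross terms on the right are then absorbed by Cauchy--Schwarz and Young's inequality: the part multiplying $\|\tr\frakd e^{k+1}\|_\Ldeux$ gets a factor $\tfrac12$ hidden into the left-hand side, and the part involving $\GRAD\vare^{k+1}$ is controlled by $\tfrac14\|\GRAD e^{k+1}\|^2 + C\|\GRAD\vare^{k+1}\|^2$ or handled by summation by parts in the time index so that only $\|\GRAD\bar\vare^\dt\|_{L^2(0,T;L^2(y^\alpha,\C_\Y))}$ (to the first power, matching the stated estimate) survives. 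Summing over $k=0,\dots,\calK-1$, the telescoping $\sum\frakd\|\GRAD e^{k+1}\|^2$ leaves $\|\GRAD e^\calK\|^2 - \|\GRAD e^0\|^2$, and the discrete Gr\"onwall / bookkeeping gives control of $\max_k\|\tr e^k\|_\Ldeux^2 = \|\tr\hat e^\dt\|_{L^\infty(0,T;\Ldeux)}^2$ and of $\dt\sum_k\|\GRAD\frakd e^k\|^2$, hence of $\|\GRAD\bar e^\dt\|_{L^2(0,T;L^2(y^\alpha,\C_\Y))}^2$ after recognizing that $\bar e^\dt$ is the piecewise-constant interpolant.

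To close the estimate I must also bound $\|\GRAD e^0\|_{L^2(y^\alpha,\C_\Y)}$, but $e^0 = V^0 - V^0_{\T_\Y} = \calH_\alpha\ue_0 - \Pi_{\T_\Y}\calH_\alpha\ue_0 = \vare^0$ by the initialization \eqref{eq:initial}, \eqref{eq:fdinit}, so this term is already of the form appearing on the right-hand side, or is subsumed once one writes the final bound in terms of $\|\tr e^0\|_\Ldeux$ and $\|\GRAD\bar\vare^\dt\|_{L^2(0,T;L^2(y^\alpha,\C_\Y))}$; here the a priori bounds of Lemma~\ref{lem:aprioriV} and the stability bound \eqref{eq:IDatabdd} guarantee that the constants depend only on $\|\fe\|_{L^2(0,T;\Ldeux)}$ and $\|\ue_0\|_{\polH^{1+s}(\Omega)}$, matching the statement. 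The main obstacle I anticipate is the careful treatment of the cross terms so that $\|\GRAD\bar\vare^\dt\|$ appears only to the first power (not squared): this forces either a summation-by-parts in time against $\frakd\vare^{k+1}$ together with the a priori control of $\dt\sum_k\|\GRAD\frakd V^k_{\T_\Y}\|^2$ from Lemma~\ref{lem:aprioriV}, or a direct Cauchy--Schwarz at the level of the $L^2(0,T;\cdot)$ norms using that $\|\GRAD\bar e^\dt\|$ is already known to be bounded a priori; reconciling this with the quadratic left-hand side is the delicate bookkeeping step.
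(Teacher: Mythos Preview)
Your choice of test functions---$\phi = V^{k+1}_{\T_\Y}$ in \eqref{eq:semidicsflow} and $W = \Pi_{\T_\Y}V^{k+1}$ in \eqref{eq:fdscheme}---is exactly what the paper does, and your justification (positivity of $\Pi_{\T_\Y}$, $\K(\T_\Y)\subset\K(\C_\Y)$ when $\psi\equiv0$) is correct.

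Where you go off track is in identifying what the right-hand side looks like after adding the two inequalities. It does \emph{not} involve $e^{k+1}$ paired against $\vare^{k+1}$. A direct computation (write $V^{k+1}_{\T_\Y}-\Pi_{\T_\Y}V^{k+1}=\vare^{k+1}-e^{k+1}$ and rearrange) gives
\[
\bigl(\tr\frakd e^{k+1},\tr e^{k+1}\bigr)_{\Ldeux}+\dt\,a_\Y(e^{k+1},e^{k+1})
\le -\bigl(\tr\frakd V^{k+1}_{\T_\Y},\tr\vare^{k+1}\bigr)_{\Ldeux}-\dt\,a_\Y(V^{k+1}_{\T_\Y},\vare^{k+1})+\dt\bigl(\fe^{k+1},\tr\vare^{k+1}\bigr)_{\Ldeux}.
\]
The right-hand side pairs $\vare^{k+1}$ only against the \emph{a priori bounded} quantities $\frakd V^{k+1}_{\T_\Y}$, $V^{k+1}_{\T_\Y}$ and $\fe^{k+1}$, not against $e^{k+1}$ or $\frakd e^{k+1}$. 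Hence there is nothing to absorb with Young, no summation by parts in time, no term $a_\Y(e^{k+1},\frakd e^{k+1})$, no telescoping $\frakd\|\GRAD e^{k+1}\|^2$, and no $\|\GRAD e^0\|$ to control. On the left you simply use $(\tr\frakd e^{k+1},\tr e^{k+1})\ge\tfrac12\frakd\|\tr e^{k+1}\|_{\Ldeux}^2$ and keep $\dt\|\GRAD e^{k+1}\|^2$ as is; summing over $k$ already gives $\|\tr e^\ell\|_{\Ldeux}^2+\|\GRAD\bar e^\dt\|_{L^2(0,T;L^2(y^\alpha,\C_\Y))}^2$ on the left.

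For the right-hand side, bound $\|\tr\vare^{k+1}\|_{\Ldeux}\lesssim\|\GRAD\vare^{k+1}\|_{L^2(y^\alpha,\C_\Y)}$ via \eqref{Trace_estimate}--\eqref{Poincare_ineq}, then apply Cauchy--Schwarz \emph{in time} to factor out $\|\GRAD\bar\vare^\dt\|_{L^2(0,T;L^2(y^\alpha,\C_\Y))}$ to the first power. The remaining factor is
\[
\dt^{-1/2}\Bigl(\sum_k\|\tr\frakd V^{k+1}_{\T_\Y}\|_{\Ldeux}^2\Bigr)^{1/2}+\|\GRAD\bar V^\dt_{\T_\Y}\|_{L^2(0,T;L^2(y^\alpha,\C_\Y))}+\|\fe\|_{L^2(0,T;\Ldeux)},
\]
and this is exactly what Lemma~\ref{lem:aprioriV} controls in terms of $\|\fe\|_{L^2(0,T;\Ldeux)}$ and $\|\ue_0\|_{\polH^{1+s}(\Omega)}$. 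That is the whole mechanism producing the first-power interpolation error; the ``delicate bookkeeping'' you anticipate is unnecessary once the right-hand side is written correctly.
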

\begin{proof}
Set $\phi = V_{\T_{\Y}}^{k+1}$ in \eqref{eq:semidicsflow} and $W = \Pi_{\T_\Y} V^{k+1}$ in \eqref{eq:fdscheme} and add the resulting inequalities to arrive at
\begin{multline*}
  \left( \tr \frakd e^{k+1}, \tr e^{k+1} \right)_{L^2(\Omega)} + \dt \| \GRAD e^{k+1} \|_{L^2(y^\alpha,\C_\Y)}^2 \leq \\
  - \left( \tr \frakd V_{\T_{\Y}}^{k+1}, \tr \vare^{k+1} \right)_{L^2(\Omega)} - \dt a_\Y(V_{\T_{\Y}}^{k+1},\vare^{k+1}) 
   + \dt \left( \fe^{k+1}, \tr \vare^{k+1} \right)_{L^2(\Omega)},
\end{multline*}
where $e^{\tau}$ and $\vare^{\tau}$ are defined by \eqref{e} and \eqref{vare}, respectively. Added over $k=0,\ldots,\ell-1$, this inequality yields
\begin{multline*}
  \| \tr e^\ell \|_{\Ldeux}^2 + \dt \sum_{k=1}^\ell \| \GRAD e^k \|_{L^2(y^\alpha,\C_\Y)}^2
  \lesssim
  \| \tr e^0 \|_{\Ldeux}^2  +\\
  \left[
    \dt^{-\tfrac{1}{2}} \left( \sum_{k=0}^{\calK-1} \| \tr \frakd V^{k+1}_{\T_\Y} \|_{\Ldeux}^2 \right)^{\tfrac{1}{2}} + \| \GRAD \bar{V}^\dt_{\T_\Y} \|_{L^2(0,T;L^2(y^\alpha,\C_\Y))}
    + \| \fe \|_{L^2(0,T;\Ldeux)}
  \right] \times \\
  \| \GRAD \bar{\vare}^\dt \|_{L^2(0,T;L^2(y^\alpha,\C_\Y))}.
\end{multline*}

Notice now that Lemma~\ref{lem:aprioriV} implies that
$
  \dt^{-1} \sum_{k=0}^{\calK-1} \| \tr \frakd V^{k+1}_{\T_\Y} \|_{\Ldeux}^2 \lesssim 1
$
and
$
  \| \GRAD \bar{V}^\dt_{\T_\Y} \|_{L^\infty(0,T;L^2(y^\alpha,\C_\Y))} \lesssim 1
$.
These estimates allow us to conclude.
\end{proof}

\begin{remark}[suboptimal estimate]\rm
Notice that, in the conclusion of Theorem~\ref{thm:firstapriori}, while the terms on the left hand side are squared, the interpolation error on the right is not. Therefore, even if the operator $\Pi_{\T_\Y}$ exhibited optimal approximation properties, this estimate is suboptimal in space. Nevertheless, from this result one can conclude convergence for rather general initial data $\ue_0$ and forcing term $\fe$	.
\end{remark}

\subsection{Analysis with regularity}
\label{sub:maxreg}

The results of \cite{MR3100955} show that, if $\ue_0 = \psi \in C^2(\bar\Omega)$ with $\Laps \psi \in C^{0,1-s}(\Omega)$ and $0 \leq \fe \in C^1((0,T],C^{0,1-s}(\Omega))$, then $\ue$ satisfies:
\begin{equation}
\label{eq:regu1}
  \diff_t \ue, \Laps \ue \in \logLip((0,T],C^{0,1-s}(\bar\Omega)) \quad s \leq \frac13
\end{equation}
and
\begin{equation}
\label{eq:regu2}
  \diff_t\ue, \Laps \ue \in C^{0,\frac{1-s}{2s}}((0,T],C^{0,1-s}(\bar\Omega)) \quad s > \frac13.
\end{equation}
These results, however, account only for the regularity of $\ue$, \ie the regularity of $\tr \ve$. For the elliptic obstacle problem \cite[Theorem 6.4]{AllenPetrosyan} studies the regularity of the solution over the cylinder $\calC_\Y$ and, on the basis of their findings, we shall assume that
\begin{equation}
\label{eq:regext}
  s \leq \frac12 \Rightarrow \ve \in C^{0,2s}(\calC_\Y); \qquad
  s > \frac12 \Rightarrow \ve \in C^{1,2s-1}(\calC_\Y).
\end{equation}
Let us now present, under these improved regularity conditions, an error analysis.

\begin{theorem}[analysis with regularity]
\label{thm:optapriori}
Assume that $\ue_0$, $\psi$ and $\fe$ are such that \eqref{eq:regu1}--\eqref{eq:regext} hold for $\ve$ and $\hat{V}^\dt$ uniformly in $\dt$.
If $\#\T_\Y \approx M^{d+1}$, then we have
\[
E(\ve,V^\dt_{\T_\Y}) \lesssim
  \dt + |\log M|^s \left[ M^{-1} + \| \GRAD \bar{\vare} \|_{L^\infty(0,T;L^2(y^\alpha,\C_\Y))} + \frac{ M^{-(1+s)} }{\tau^{1/2}}  \right],
\]
where the hidden constant depends only on the problem data.
\end{theorem}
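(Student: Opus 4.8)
The plan is to estimate $E(\ve, V^\dt_{\T_\Y})$ by inserting the intermediate semidiscrete solution $V^\dt$ of \eqref{eq:initial}--\eqref{eq:semidicsflow}. By the triangle inequality and the definition \eqref{eq:E},
\[
  E(\ve, V^\dt_{\T_\Y}) \leq E(\ve, V^\dt) + \| \tr(\hat V^\dt - \hat V^\dt_{\T_\Y}) \|_{L^\infty(0,T;\Ldeux)} + \| \GRAD(\bar V^\dt - \bar V^\dt_{\T_\Y}) \|_{L^2(0,T;L^2(y^\alpha,\C_\Y))},
\]
so it suffices to bound these two pieces separately. The first term is the pure time-discretization error, which is handled by Corollary~\ref{col:semiimpl}: under the hypotheses \eqref{eq:regu1}--\eqref{eq:regext}, the regularity of $\ue$ (hence of the data) is more than enough to guarantee $\ue_0 \in \K(\Omega)\cap\polH^{2s}(\Omega)$ and the requisite $BV$-in-time condition on $\fe$, yielding $E(\ve,V^\dt) \lesssim \dt$. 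The remaining term, $E(V^\dt, V^\dt_{\T_\Y})$ involving $\hat e^\dt$ and $\bar e^\dt$ with $e^\dt$ as in \eqref{e}, is the genuine space-discretization error and is where the work lies.

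For that term I would follow the energy argument of Theorem~\ref{thm:firstapriori} but now exploit the extra regularity to get a \emph{squared} interpolation error on the right-hand side rather than the suboptimal unsquared one. Concretely: set $\phi = V^{k+1}_{\T_\Y}$ (admissible after the reduction to $\psi\equiv 0$, since then $\K(\T_\Y)\subset\K(\C_\Y)$) in \eqref{eq:semidicsflow}, set $W = \Pi_{\T_\Y}V^{k+1}$ in \eqref{eq:fdscheme}, add, and split $e^{k+1} = (V^{k+1} - \Pi_{\T_\Y}V^{k+1}) + (\Pi_{\T_\Y}V^{k+1} - V^{k+1}_{\T_\Y}) = \vare^{k+1} + \theta^{k+1}$. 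One obtains an inequality of the schematic form
\[
  (\tr \frakd \theta^{k+1}, \tr \theta^{k+1})_{\Ldeux} + \dt\, a_\Y(\theta^{k+1}, \theta^{k+1}) \lesssim (\tr\frakd\vare^{k+1},\tr\theta^{k+1})_{\Ldeux} + \dt\, a_\Y(\vare^{k+1},\theta^{k+1}) + (\text{consistency terms in }\vare^{k+1}),
\]
and then uses $2a(a-b)\geq a^2 - b^2$, Young's inequality to absorb the $\theta$-terms on the right into the left, and sums over $k$. This reduces the estimate for $\theta^\dt$ (hence for $e^\dt = \vare^\dt + \theta^\dt$) to controlling $\| \GRAD\bar\vare^\dt\|$, the time-difference quantity $\dt^{-1/2}(\sum_k \|\tr\frakd\vare^{k+1}\|_{\Ldeux}^2)^{1/2}$, and the $L^2$-in-space interpolation error $\|\tr\bar\vare^\dt\|_{\Ldeux}$. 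The first is exactly the $\| \GRAD\bar\vare\|_{L^\infty(0,T;L^2(y^\alpha,\C_\Y))}$ term in the claim (using $\ell^\infty\hookrightarrow \ell^2$ with a $|\log M|$-type factor from the graded mesh). For the last, superapproximation \eqref{eq:Pisuperapprox} with $\delta = 1+s$ together with the regularity \eqref{eq:regext} of $\ve$ (which controls $\tr V^\dt$ in $\polH^{1+s}(\Omega)$ uniformly in $\dt$) gives $\|\tr\bar\vare^\dt\|_{\Ldeux}\lesssim h_{\T_\Omega}^{1+s}\approx M^{-(1+s)}$; the factor $\dt^{-1/2}$ in the claim comes from converting the time-difference sum of the $\Ldeux$-interpolation errors into an $L^\infty$-bound on $\tr\frakd V^\dt$ divided by $\dt$, using the a priori bound $\sum_k\|\tr\frakd V^{k+1}\|_{\Ldeux}^2\lesssim\dt$ (the semidiscrete analogue of Lemma~\ref{lem:aprioriV}) and superapproximation applied to each $\frakd V^{k+1}$. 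Finally, the $M^{-1}$ term and the $|\log M|^s$ prefactor are the standard anisotropic finite element estimate for $\|\GRAD\bar\vare^\dt\| = \|\GRAD(V^\dt - \Pi_{\T_\Y}V^\dt)\|_{L^2(y^\alpha,\C_\Y)}$, obtained by combining the weighted interpolation theory of \cite{NOS,NOS2} on the graded mesh \eqref{eq:gradedmesh} with the second-order regularity estimates \eqref{eq:reg_estimates}, exactly as in \cite[Theorem 5.4 \& Corollary 5.5]{NOS}; the logarithm arises because the $\polH^{1+s}$-bound on $\tr V^\dt$ is at the borderline of what the weighted estimates require.

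The main obstacle I expect is the bookkeeping of the graded-mesh anisotropic interpolation estimates and, in particular, tracking uniformity in $\dt$: one must verify that $\hat V^\dt$ (equivalently $\tr V^\dt$, and the $\alpha$-harmonic extension of it) inherits the regularity \eqref{eq:regext} \emph{uniformly in $\dt$}, which is precisely the hypothesis stated in the theorem and is justified by the discrete maximal-regularity / comparison estimates for the implicit Euler scheme together with \cite{MR3100955,AllenPetrosyan}; granting that, the term $\|\GRAD\bar\vare\|_{L^\infty(0,T;L^2(y^\alpha,\C_\Y))}$ is left explicit in the statement (it will be made $\lesssim |\log M| M^{-1}$ under the further regularity discussed in \cite{NOS}, but that refinement is deferred). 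The second delicate point is the consistency terms generated by testing the two variational inequalities with mismatched test functions: one must check that after the reduction $\psi\equiv 0$ the chosen test functions are genuinely admissible ($V^{k+1}_{\T_\Y}\in\K(\T_\Y)\subset\K(\C_\Y)$ and $\Pi_{\T_\Y}V^{k+1}\in\K(\T_\Y)$, the latter by positivity preservation \eqref{eq:Pipositive}), so that no spurious sign-indefinite boundary contribution survives.
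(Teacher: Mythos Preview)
Your overall decomposition into a time error $E(\ve,V^\dt)$ (handled by Corollary~\ref{col:semiimpl}) plus a space error $E(V^\dt,V^\dt_{\T_\Y})$ is correct and matches the paper. The energy setup for the space error is also reasonable in spirit. However, there is a genuine gap in how you treat what you call the ``consistency terms in $\vare^{k+1}$''.

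After testing \eqref{eq:semidicsflow} with $\phi=V^{k+1}_{\T_\Y}$ and \eqref{eq:fdscheme} with $W=\Pi_{\T_\Y}V^{k+1}$ and adding, the residual term that survives (in your notation, the consistency term) is precisely
\[
  -\left(\tr\tfrac{\frakd V^{k+1}}\dt,\tr\vare^{k+1}\right)_{\Ldeux}-a_\Y(V^{k+1},\vare^{k+1})+(\fe^{k+1},\tr\vare^{k+1})_{\Ldeux}.
\]
This is \emph{not} a term in $\theta^{k+1}$, so Young's inequality cannot absorb it; and bounding $|a_\Y(V^{k+1},\vare^{k+1})|\le\|\GRAD V^{k+1}\|\,\|\GRAD\vare^{k+1}\|$ is linear in $\vare$ and only reproduces the suboptimal estimate of Theorem~\ref{thm:firstapriori} (see the Remark that follows it). The paper's key idea, which your proposal omits, is to \emph{integrate by parts}: since $V^{k+1}$ is $\alpha$-harmonic, $a_\Y(V^{k+1},\cdot)$ reduces to a boundary pairing with $\ds^{-1}\partial_{\nu^\alpha}V^{k+1}$, and the three pieces combine into the multiplier $\calZ^{k+1}=\dt^{-1}\tr\frakd V^{k+1}-\fe^{k+1}+\ds^{-1}\partial_{\nu^\alpha}V^{k+1}\ge0$. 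One then performs a cell-by-cell analysis on $\T_\Omega$ using the complementarity condition $\calZ^{k+1}(\tr V^{k+1}-\psi)=0$: only cells touching the discrete free boundary contribute, and there the H\"older regularity \eqref{eq:regu1}--\eqref{eq:regext} gives the growth bounds $0\le\tr V^{k+1}-\psi\lesssim h_{\T_\Omega}^{1+s}$ and $0\le\calZ^{k+1}\lesssim h_{\T_\Omega}^{1-s}$, yielding a contribution $\lesssim\dt\,h_{\T_\Omega}^2$ per step. This is the source of the $M^{-1}$ term in the claimed bound; it is \emph{not} the standard anisotropic interpolation estimate (that one is kept explicit as $\|\GRAD\bar\vare\|_{L^\infty(0,T;L^2(y^\alpha,\C_\Y))}$).

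In short: your variational/energy argument alone cannot deliver the rate in Theorem~\ref{thm:optapriori}; the missing ingredient is the strong-form (complementarity) treatment of the consistency term, exactly as in \cite[Theorem~4.24]{NOS4}, which is where the improved regularity is actually used.
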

\begin{proof}
The results of Proposition~\ref{prop:experror} and Corollary~\ref{col:semiimpl} reduce the analysis to estimating the difference $e^\dt = V^\dt - V^\dt_{\T_\Y}$.
Using the well-known indentity $2a(a-b) = a^2 - b^2 +(a-b)^2$ we derive
\begin{multline*}
  \frac12 \left( \frakd \| \tr e^{k+1}\|_{\Ldeux}^2 + \| \tr \frakd e^{k+1} \|_{\Ldeux}^2 \right) + \frac\dt\ds \| \GRAD e^{k+1} \|_{L^2(y^\alpha,\C_\Y)}^2  \\
  = \left( \tr \frakd e^{k+1}, \tr e^{k+1} \right)_{\Ldeux} + \frac\dt\ds \| \GRAD e^{k+1} \|_{L^2(y^\alpha,\C_\Y)}^2.
\end{multline*}
Therefore, invoking \eqref{e} and \eqref{vare}, we arrive at
\begin{multline*}
   \left( \tr \frakd e^{k+1}, \tr e^{k+1} \right)_{\Ldeux} = \left( \tr \frakd e^{k+1}, \tr \vare^{k+1} \right)_{\Ldeux} \\
   + \left( \tr \frakd e^{k+1}, \tr (\Pi_{\T_\Y} V^{k+1} - V^{k+1}_{\T_\Y}) \right)_{\Ldeux} 
   \leq \frac12 \| \tr \frakd e^{k+1} \|_{\Ldeux}^2 \\
   + \frac12 \|\tr \vare^{k+1} \|_{\Ldeux}^2 
  + \left( \tr \frakd e^{k+1}, \tr ( \Pi_{\T_\Y} V^{k+1} - V^{k+1}_{\T_\Y}) \right)_{\Ldeux}
\end{multline*}
and
\begin{multline*}
  \frac1\ds \| \GRAD e^{k+1} \|_{L^2(y^\alpha,\C_\Y)}^2 = a_\Y(e^{k+1}, \vare^{k+1} ) + a_\Y( e^{k+1}, \Pi_{\T_\Y} V^{k+1} - V^{k+1}_{\T_\Y} ) \\
  \leq \frac1{2\ds} \| \GRAD e^{k+1} \|_{L^2(y^\alpha,\C_\Y)}^2 + \frac1{2\ds} \| \GRAD \vare^{k+1} \|_{L^2(y^\alpha,\C_\Y)}^2
  + a_\Y( e^{k+1}, \Pi_{\T_\Y} V^{k+1} - V^{k+1}_{\T_\Y} ).
\end{multline*}
Consequently,
\begin{multline*}
  \frakE^{k+1}:=\frac12 \frakd \| \tr e^{k+1} \|_{\Ldeux}^2 + \frac\dt{2\ds} \| \GRAD e^{k+1} \|_{L^2(y^\alpha, \C_\Y )}^2 \\
  \lesssim \frac12 \| \tr \vare^{k+1} \|_{\Ldeux}^2 +\frac\dt{2\ds} \| \GRAD \vare^{k+1} \|_{L^2(y^\alpha, \C_\Y )}^2
  \\
  + \left( \tr \frakd e^{k+1}, \tr ( \Pi_{\T_\Y} V^{k+1} -  V^{k+1}_{\T_\Y} ) \right)_{\Ldeux}  +
  \dt a_\Y(e^{k+1}, \Pi_{\T_\Y} V^{k+1} - V^{k+1}_{\T_\Y} )
\end{multline*}
Since $\Pi_{\T_\Y} V^{k+1} \in \K(\T_\Y)$, we use the scheme \eqref{eq:fdscheme} with $W = \Pi_{\T_\Y} V^{k+1}$ to derive
\begin{multline*}
  \frakE^{k+1} \lesssim 
  \| \tr \vare^{k+1} \|_{\Ldeux}^2 + \dt \| \GRAD \vare^{k+1} \|_{L^2(y^\alpha, \C_\Y )}^2 + \\
  \left( \tr \frakd V^{k+1}, \tr (\Pi_{\T_\Y} V^{k+1} - V^{k+1}_{\T_\Y}) \right)_{\Ldeux} + \dt a_\Y( V^{k+1}, \Pi_{\T_\Y} V^{k+1} - V^{k+1}_{\T_\Y} )\\
  - \dt \left( \fe^{k+1}, \tr (\Pi_{\T_\Y} V^{k+1} - V^{k+1}_{\T_\Y}) \right)_{\Ldeux}.
\end{multline*}

The smoothness assumptions on $V^\dt$ allow us to integrate by parts to obtain
\begin{multline*}
  a_\Y(V^{k+1}, \Pi_{\T_\Y} V^{k+1} - V^{k+1}_{\T_\Y} ) = 
  - \frac1\ds \int_{\C_\Y} \DIV(y^\alpha \GRAD V^{k+1})(\Pi_{\T_\Y} V^{k+1} - V^{k+1}_{\T_\Y} ) \diff x' \diff y \\
  + \frac1\ds \left( \partial_{\nu^\alpha} V^{k+1}, \tr \Pi_{\T_\Y} V^{k+1} - \tr V^{k+1}_{\T_\Y} \right)_{\Ldeux}.
\end{multline*}
Since $\calZ^{k+1} = \dt^{-1} \tr \frakd V^{k+1} - \fe^{k+1} + \ds^{-1} \partial_{\nu^\alpha} V^{k+1} \geq 0$, the regularity assumptions imply, in particular, that $\hat{\calZ}^\dt \in C([0,T];C^{0,1-s}(\Omega))$. Therefore
\begin{align*}
  \frakE^{k+1} &\lesssim \| \tr \vare^{k+1} \|_{\Ldeux}^2 + \dt \| \GRAD \vare^{k+1} \|_{L^2(y^\alpha, \C_\Y )}^2 \\ &+ 
  \dt \int_{\Omega \times \{0\}} \calZ^{k+1} \tr(\Pi_{\T_\Y} V^{k+1} - V^{k+1}_{\T_\Y} ) \diff x'.
\end{align*}

We proceed now as in \cite[Theorem 4.24]{NOS4} and realize that it suffices to consider
\[
 \sum_{K \in \T_{\Omega}} \int_{K \times \{0\}} \calZ^{k+1} ( \tr( \Pi_{\T_\Y}V^{k+1} - \Pi_{\T_\Y}\calH_\alpha \psi) - \tr( V^{k+1} - \calH_\alpha \psi) ) \diff x' 
 =  \sum_{K \in \T_{\Omega}} \mathcal{I}(K).
\]
We analyze separately the cells $K \in \T_{\Omega}$ according to the value of $\tr( V^{k+1} - \calH_\alpha \psi) $.
\begin{enumerate}[$\bullet$]
 \item \emph{$\tr( V^{k+1} - \calH_\alpha \psi) > 0 $ in a neighborhood of $K$.} In this situation $\calZ^{k+1} = 0$, and thus $\mathcal{I}(K)$ vanishes.
 \item \emph{$\tr( V^{k+1} - \calH_\alpha \psi) = 0 $ in a neighborhood of $K$.} The linearity of $\Pi_{\T_{\Y}}$ yields that $\mathcal{I}(K)$ vanishes.
 \item \emph{$\tr( V^{k+1} - \calH_\alpha \psi) $ is not identically zero nor strictly positive in a neighborhood of $K$.} In this case, either $\tr( \Pi_{\T_\Y}V^{k+1} - \Pi_{\T_\Y}\calH_\alpha\psi) = 0$ or $\calZ^{k+1}=0$. If $K$ is such a cell, then there is $x_0' \in K$ where $\tr V^{k+1}(x_0') = \psi(x_0')$ so that the smoothness assumptions on $\psi$ and the regularity results of \cite{MR3100955} allow us to conclude the growth estimate
$
  0 \leq \tr V^{k+1}(x') - \psi(x') \lesssim h_{\T_\Omega}^{1+s} \quad \forall x' \in K.
$
By the same reasoning
$
  0 \leq \calZ^{k+1}(x') \lesssim h_{\T_\Omega}^{1-s}.
$
Then, $\mathcal{I}(K) \lesssim h_{\T_\Omega}^2$.
\end{enumerate}
Collecting the derived estimates we obtain
\begin{equation*}
\label{frakE_estimate}
  \frakE^{k+1} \lesssim \| \tr \vare^{k+1} \|_{\Ldeux}^2 + \dt \| \GRAD \vare^{k+1} \|_{L^2(y^\alpha, \C_\Y )}^2 + \dt h_{\T_\Omega}^2 .
\end{equation*}
Add this expression over $k$. Using Proposition~\ref{prop:stabsemidiscrete} and the regularity results \eqref{eq:regu1} and \eqref{eq:regu2} we have that $\tr \bar{V}^\tau \in L^\infty(0,T;\polH^{1+s}(\Omega))$. Therefore, using the superapproximation of traces of the operator $\Pi_{\T_\Y}$ \eqref{eq:Pisuperapprox} we obtain
\[
  \sum_{k=1}^\calK \| \tr \vare^k \|_{\Ldeux}^2 \lesssim \frac1\tau  \| \tr\vare^\tau \|_{L^\infty(0,T;\Ldeux)}^2
  \lesssim \frac{ h_{\T_\Omega}^{2(1+s)} }\tau.
\]
To conclude, we recall that $h_{\T_\Omega} \approx(\# \T_\Y)^{-1/(d+1)}$.
\end{proof}

\section{Positivity preserving interpolation over anisotropic meshes}
\label{sub:interpolant}

The error analysis that was presented in previous sections relied on energy arguments, and for that we needed to choose suitable test functions in the semidiscrete and discrete schemes. This brings forth the need for a positivity preserving interpolant.

The construction of positivity preserving approximation operators has a rich history in approximation theory. The classical convergence \cite{MR0057472} and impossibility \cite{MR0089939} results of P.P.~Korovkin come immediately to mind in this respect. In the finite element literature, a positivity preserving interpolant was constructed in \cite{MR1742264} and it was later showed in \cite{MR1933037} that it cannot be of order higher than one.

The operator of \cite{MR1742264} is analyzed under the assumption that the mesh is shape regular. In our setting we need an interpolant that preserves constraints of traces and that exhibits suitable approximation properties in weighted spaces and over anisotropic meshes. This makes the extension of the ideas of \cite{MR1742264} not straightforward, if at all possible. For this reason, we will restrict our attention to the case $s>3/8$ and, by combining the ideas developed in the construction of the positivity preserving interpolation operator of \cite{MR1742264} with the quasi-interpolation operator analyzed in \cite{NOS,NOS2}, construct a positivity preserving operator on anisotropic meshes that possesses suitable approximation properties on $\alpha$-harmonic functions.

Let us present a slight modification of the quasi-interpolation operator $\calL_{\T_\Y}$ of \cite{NOS,NOS2}. To do so, we introduce some notation and terminology. Given $\T_\Y$, we denote by $\N$ the set of its nodes and by $\N_{\textrm{in}}$ the set of its interior and Neumann nodes. For each vertex $\vero \in \N$, we write $\vero= (\vero',\vero'')$, where $\vero'$ corresponds to a node of $\T_\Omega$, and $\vero''$ corresponds to a node of the discretization in the extended dimension. We define $h_{\vero'}= \min\{h_K: K \in \T_\Omega, \ \vero' \ni K\}$, and $h_{\vero''} = \min\{h_I: I \in \calI_\Y, \ \vero'' \ni I\}$. Given $\vero \in \N$, the \emph{star} or patch around $\vero$ is defined as
$
  S_{\vero} = \cup_{T \ni \vero  } T,
$
and, for $T \in \T_{\Y}$, we define its \emph{patch} as
$
  S_T = \cup_{\vero \in T} S_\vero.
$
We set $\N_\Omega = \{ \vero': (\vero',\vero'') \in \N_{\textrm{in}} \}$.

Let $\mu_1 \in C^{\infty}_0(\mathbb{R}^{n})$ be such that $\supp \mu_1 \subset B_{r}$, where $B_r$ denotes the ball in $\R^n$ centered at zero and with radius $r \leq 1/\sigma_{\Omega}$; moreover, we require that $\mu_1 \geq 0$, $\int \mu_1(x') \diff x' = 1$ and that $\mu_1$ has vanishing first order moments, \ie $\int \mu_1(x') x^i \diff x' = 0$ for all $i=1,\ldots, d$. Let $\mu_2 \in C_0^{\infty}(\mathbb{R})$ be such that $\supp \mu_2 \subset (0,r_{\Y})$, where $r_{\Y} \leq 1/\sigma$ and $\int \mu_2(y) \diff y = 1$. We then define $\mu(x',y):= \mu_1(x') \mu_2(y)$, which satisfies $\int \mu \diff x' \diff y = 1$ and $\supp \mu \subset B_r \times (0,r_{\Y})$. 
For $\vero \in \N_{\textrm{in}}$ we define
\[
  \mu_{1,\vero'}(x') = \frac1{h_{\vero'}^d} \mu_1\left(\frac{x'-\vero'}{h_{\vero'}} \right),
  \quad
  \mu_{2,\vero''}(y) = \frac1{  h_{\vero''} }  \mu_2\left( \frac{y-\vero''}{h_{\vero''}} \right),
\]
and
$
  \mu_{\vero}(x',y) = \mu_{1,\vero'}(x') \mu_{2,\vero''}(y)
$.
We note that $\textrm{supp}~\mu_{\vero} \subset S_{\vero}$ and $\int_{S_{\vero}} \mu_{\vero} \diff x' \diff y = 1$ for any node $\vero \in \N_{\textrm{in}}$.

Given a function $w \in L^1(\C_\Y)$ and a node $\vero$ in $\N_{\textrm{in}}$, the regularized Taylor polynomial of first degree of $w$ about $\vero$ is defined as follows: 
\begin{equation*}
\label{p1average}
  w_{\vero}(z) = \int_{S_{\vero}} P(x,z) \mu_{\vero}(x) \diff x,
\end{equation*}
where $P$ denotes the Taylor polynomial of degree one in the variable $z$ of the function $w$ about the point $x$, \ie
$ P(x,z) = w(x) + \nabla w(x) \cdot (z - x)$.

If $\Lambda_{\vero}$ denotes the Lagrange basis function associated with the node $\vero$ in the discretization $\T_{\Y}$, we then define the averaged interpolant $\mathcal{L}_{\T_{\Y}} w$ as follows:
\begin{equation}
\label{eq:L}
  \mathcal{L}_{\T_{\Y}} w = \sum_{\vero \in \N_{\textrm{in}}} w_{\vero}(\vero) \Lambda_{\vero}.
\end{equation}
$\mathcal{L}_{\T_{\Y}}$ is linear, stable and possesses optimal approximation properties in $\HL(y^\alpha,\calC_\Y)$. It is well suited for anisotropic meshes. The only difference between this construction and that of \cite{NOS,NOS2} is in the particular choice of the weighting function $\mu$.

We now turn to the construction of \cite{MR1742264} and slightly modify it to suit our purposes. Given $\phi \in L^1(\Omega)$ we define the interpolation operator $\mathcal{R}_{\T_{\Omega}}: L^1(\Omega) \rightarrow \polV(\T_{\Omega})$ by
\begin{equation}
\label{eq:CH_operator}
\mathcal{R}_{\T_{\Omega}}\phi = \sum_{\vero' \in \N_{\Omega}} \left( \int_{S_{\vero'}'} \mu_{1,\vero'}(z') \phi(z') \diff z' \right) \Lambda_{\vero'},
\end{equation}
where $S_{\vero'}'$ denotes the star (in $\Omega$) around $\vero'$ and $\Lambda_{\vero'}$ denotes the Lagrange basis function associated with node $\vero'$ in $\T_{\Omega}$. The assumption that $\mu_1 \geq 0$ yields the, fundamental, positivity preserving property \eqref{eq:Pipositive}. In addition, the symmetry properties (vanishing moments) of the function $\mu_{1,\vero'}$ imply that this operator preserves linears locally, therefore \eqref{eq:Pisuperapprox} holds.

We now define a positivity preserving interpolant $\Pi_{\T_\Y} : W^1_1(\calC_\Y) \to \polV(\T_\Y)$. Let $w \in W^1_1(\C_{\Y})$ and $(\vero',\vero'')$ be a node of $\T_\Y$, then
\begin{equation}
\label{eq:defofPi}
  \Pi_{\T_\Y} w(\vero',\vero'') = \begin{dcases}
                          \calL_{\T_\Y} w(\vero',\vero'') & \vero' > 0, \\
                          \calR_{\T_\Omega} w(\vero',0)  & \vero'' = 0.
                        \end{dcases}
\end{equation}

The approximation properties \eqref{eq:Piapprox} of the operator $\Pi_{\T_{\Y}}$ are as follows.

\begin{theorem}[interpolation estimate]
\label{TH:Pi_approx}
Let $s>3/8$. If $w \in \HL(y^{\alpha},\C_{\Y}) \cap H^2(y^{\beta},\C_{\Y})$ is $\alpha$-harmonic and $\tr w  \in \polH^{1+s}(\Omega)$, then 
\begin{equation}
\label{eq:Pi_estimate}
 \| \nabla( w - \Pi_{\T_{\Y}} w) \|_{L^2(y^{\alpha},\C_{\Y})} \lesssim ( \# \T_{\Y})^{-\frac{\theta}{(d+1)}}
\end{equation}
for $\theta < \theta_0 = \min\{1,\frac{8s-3}{4s} \}$. The hidden constant blows up as $\theta \uparrow \theta_0$, is independent of $\# \T_{\Y}$ and depends on $w$ through $\| w_{yy} \|_{L^2(y^{\beta},\C_{\Y})}$, $\| \nabla \nabla_{x'} w \|_{L^2(y^{\alpha},\C_{\Y})}$, $\| \nabla w \|_{L^2(y^{\alpha},\C_{\Y})}$ and $\| \tr w \|_{\polH^{1+s}(\Omega)}$.
\end{theorem}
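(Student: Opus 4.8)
The plan is to adapt the strategy used for the elliptic obstacle problem in \cite[Theorem 4.24]{NOS4} to the present weighted, anisotropic setting. Following the definition \eqref{eq:defofPi} of $\Pi_{\T_\Y}$, the natural decomposition is
\[
  w - \Pi_{\T_\Y} w = \bigl( w - \calL_{\T_\Y} w \bigr) + \bigl( \calL_{\T_\Y} w - \Pi_{\T_\Y} w \bigr) =: \eta + \delta .
\]
For $\eta$ I would invoke the anisotropic interpolation theory of \cite{NOS,NOS2}. Since $w$ is $\alpha$-harmonic with $\tr w \in \polH^{1+s}(\Omega)$, the regularity estimate \eqref{eq:reg_estimates} provides $\partial_{yy} w \in L^2(y^\beta,\C_\Y)$ and $\GRAD\GRAD_{x'} w \in L^2(y^\alpha,\C_\Y)$, and the mesh grading \eqref{eq:gradedmesh}, namely $\gamma > 3/(2s)$, is precisely the condition under which $\calL_{\T_\Y}$ is quasi-optimal, so that $\| \GRAD \eta \|_{L^2(y^\alpha,\C_\Y)} \lesssim |\log \# \T_\Y|^s (\# \T_\Y)^{-1/(d+1)}\| \tr w \|_{\polH^{1+s}(\Omega)}$; in particular this is $\lesssim (\# \T_\Y)^{-\theta/(d+1)}$ for every $\theta < 1$, which accounts for the ``$1$'' in $\theta_0$.

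The heart of the matter is the correction $\delta$. By \eqref{eq:defofPi}, $\Pi_{\T_\Y} w$ and $\calL_{\T_\Y} w$ agree at every node that is not on $\Omega \times \{0\}$, so $\delta$ vanishes at all such nodes; since the first sub-interval of the graded partition \eqref{eq:gradedmesh} is $[0, y_1]$ with $y_1 = M^{-\gamma}\Y$, the function $\delta$ is supported in the single layer $\Omega \times (0, y_1)$. Using the tensor-product structure of $\T_\Y$, on a cell $T = K \times [0,y_1]$ one has $\delta|_T = (1 - y/y_1)\, g_K(x')$, where $g_K \in \calP(K)$ is the planar nodal interpolant of the mismatch $d_{\vero'} := \calL_{\T_\Y} w(\vero',0) - \calR_{\T_\Omega}(\tr w)(\vero')$. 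Evaluating the one-dimensional $y^\alpha$-weighted integrals then gives
\[
  \| \GRAD \delta \|_{L^2(y^\alpha,\C_\Y)} \lesssim \bigl( y_1^{-s} + y_1^{1-s} h_{\T_\Omega}^{-1} \bigr)\, \| g \|_{L^2(\Omega)} \lesssim y_1^{-s}\, \| g \|_{L^2(\Omega)},
\]
the tangential contribution being lower order because $\gamma > 1$; here $g$ is the piecewise function built from the $g_K$, and $\| g \|_{L^2(\Omega)}^2 \approx \sum_{\vero'} h_{\vero'}^d\, |d_{\vero'}|^2$ by shape regularity of $\T_\Omega$.

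It remains to estimate the boundary mismatch, and this is where the condition $s > 3/8$ will appear. The structural point is that, because the weighting kernels are products $\mu_\vero = \mu_{1,\vero'}\mu_{2,\vero''}$ with $\int \mu_{2,\vero''} = 1$ and $\mu_1$ has vanishing first-order moments, the vertical extension $\tilde w(x',y) := \tr w(x')$ of the trace is reproduced exactly by both $\calL_{\T_\Y}(\,\cdot\,,0)$ and $\calR_{\T_\Omega}$; hence $d_{\vero'}$ is the $S_\vero$-average of the first-degree Taylor remainder of $w - \tilde w$ about $\vero$. Since $w - \tilde w$ and its tangential gradient vanish on $\Omega \times \{0\}$, this remainder involves only $\int_0^y\!\int_t^y \partial_{yy} w(x',r)\,\diff r\,\diff t$ and a second-order tangential term controlled by $\GRAD\GRAD_{x'} w$; both are estimated over the thin layer $\Omega \times (0, y_1)$ in the Muckenhoupt weights $y^\beta$ and $y^\alpha$, using Minkowski's inequality in $x'$, Hardy-type bounds such as $\int_0^{y_1} r^{2-\beta}\,\diff r \lesssim y_1^{2s}$, and the $y^{2s}$-type behaviour of $\alpha$-harmonic extensions near the degenerate face. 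This produces a bound for $\| g \|_{L^2(\Omega)}$ combining a positive power of $y_1$ (reflecting the vanishing of $w - \tilde w$ at $y=0$) with tangential-interpolation floors involving powers of $h_{\T_\Omega}$. Substituting $h_{\T_\Omega} \approx M^{-1}$, $y_1 \approx M^{-\gamma}$, $\# \T_\Y \approx M^{d+1}$ into $y_1^{-s}\| g \|_{L^2(\Omega)}$, and then minimizing the resulting exponent over $\gamma$ subject to the hard constraint $\gamma > 3/(2s)$ — so that the optimal value sits on the boundary of the admissible range — yields $\theta_0 = \min\{1, \tfrac{8s-3}{4s}\}$, with the constant degenerating as $\theta \uparrow \theta_0$, and forces $\theta_0 > 0$, i.e. $s > 3/8$.

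The step I expect to be the main obstacle is precisely the sharp estimate of $g$: the second vertical derivative of an $\alpha$-harmonic function is genuinely singular as $y \downarrow 0$ — it belongs only to the weighted space $L^2(y^\beta,\C_\Y)$ with $\beta = \alpha + 2 > 1$, outside the Muckenhoupt range — so a crude Cauchy--Schwarz in the extended variable diverges and one must carefully extract the algebraic gain from restricting to the first mesh layer of height $M^{-\gamma}$, in the spirit of the second-order weighted interpolation estimates of \cite{NOS2}. It is the tension between this gain, the amplification factor $y_1^{-s}$ coming from the thin layer, and the grading requirement $\gamma > 3/(2s)$ of \eqref{eq:gradedmesh} that pins down both the threshold $s > 3/8$ and the exponent $\theta_0$.
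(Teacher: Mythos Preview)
Your overall architecture matches the paper exactly: decompose $w-\Pi_{\T_\Y}w$ into $(w-\calL_{\T_\Y}w)+(\calL_{\T_\Y}w-\Pi_{\T_\Y}w)$, handle the first piece by the anisotropic estimates of \cite{NOS,NOS2}, observe that the second piece is supported in the first layer $\Omega\times(0,y_1)$, and reduce everything to the nodal mismatch $d_{\vero'}=\calL_{\T_\Y}w(\vero',0)-\calR_{\T_\Omega}(\tr w)(\vero')$. The final accounting leading to $\theta_0=\min\{1,(8s-3)/(4s)\}$ via $y_1\approx M^{-\gamma}$ with $\gamma>3/(2s)$ is also the paper's.

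The gap is in your treatment of $d_{\vero'}$. Your key structural claim --- that the vertical extension $\tilde w(x',y)=\tr w(x')$ is ``reproduced exactly'' by both operators, so that $d_{\vero'}$ reduces to $\calL_{\T_\Y}(w-\tilde w)(\vero',0)$ --- is false. Because $\calL_{\T_\Y}$ averages the \emph{first}-degree Taylor polynomial, one has
\[
  \calL_{\T_\Y}\tilde w(\vero',0)-\calR_{\T_\Omega}(\tr w)(\vero')
  =\int_{S'_{\vero'}}\nabla_{x'}(\tr w)(x')\cdot(\vero'-x')\,\mu_{1,\vero'}(x')\diff x',
\]
which does not vanish; the vanishing-moment property of $\mu_1$ lets you subtract $\nabla_{x'}(\tr w)(z')$ for any fixed $z'$, but that still leaves a genuine second-order tangential term. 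This missing term is precisely the paper's term~II, and it, not the vertical part, is the one that produces the bottleneck $y_1^{-1}h_{\vero'}^4\approx M^{\gamma-4}$ in the $\partial_y$-component $\calE_{d+1}$ and hence the exponent $(8s-3)/(4s)$.

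Relatedly, the difficulty you flag as the ``main obstacle'' is not one you need to face. The paper never estimates $d_{\vero'}$ through $\partial_{yy}w\in L^2(y^\beta,\C_\Y)$. Instead it writes $d_{\vero'}=\mathrm{I}+\mathrm{II}+\mathrm{III}$, where $\mathrm{I}$ involves $w(x',y)-w(x',0)$, $\mathrm{III}$ involves $-y\,\partial_y w(x',y)$, and $\mathrm{II}$ is the tangential term above (with $w(\cdot,y)$ in place of $\tr w$). Terms $\mathrm{I}$ and $\mathrm{III}$ are controlled by $\|\partial_y w\|_{L^2(y^\alpha,S_{(\vero',0)})}$ together with the first-layer bound $\|\partial_y w\|_{L^2(y^\alpha,\Omega\times(0,y_1))}\lesssim M^{-1}$ from the graded-mesh analysis of \cite[\S5.2]{NOS}; term~$\mathrm{II}$ is controlled by $\|\nabla_{x'}^2 w\|_{L^2(y^\alpha,\Omega\times(0,y_1))}^2\lesssim y_1\|\tr w\|_{\polH^{1+s}(\Omega)}^2$. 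All weights stay in the Muckenhoupt range and only the $A_2$ bound $\bigl(\tfrac1{y_1}\int_0^{y_1}y^\alpha\bigr)\bigl(\tfrac1{y_1}\int_0^{y_1}y^{-\alpha}\bigr)\lesssim 1$ is used. So your anticipated Hardy/$L^2(y^\beta)$ machinery for $\partial_{yy}w$ can be dropped; if you insist on going through $w-\tilde w$, rewrite its vertical increments with a single $\partial_y w$ integral rather than two, and keep the tangential correction term that your $\tilde w$ argument lost.
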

\begin{proof}
Consider
\begin{multline*}
  \|\GRAD ( w - \Pi_{\T_\Y} w) \|_{L^2(y^\alpha,\calC_\Y)}^2 = 
  \|\GRAD ( w - \Pi_{\T_\Y} w) \|_{L^2(y^\alpha,\Omega \times (0,y_1))}^2 
  \\ + 
  \|\GRAD ( w - \Pi_{\T_\Y} w) \|_{L^2(y^\alpha,\Omega \times (y_1,\Y))}^2, 
\end{multline*}
where $y_1$ is defined by \eqref{eq:gradedmesh}. Since over $\Omega \times (y_1,\Y)$ the operators $\mathcal{L}_{\T_{\Y}}$ and $\Pi_{\T_{\Y}}$ coincide, we invoke 
\cite[Theorem 5.4]{NOS} to arrive at
\begin{equation}
\label{eq:w-Lw}
 \|\GRAD ( w - \Pi_{\T_\Y} w) \|_{L^2(y^\alpha,\Omega \times (y_1,\Y))} \lesssim (\# \T_{\Y})^{-1/(n+1)}, 
\end{equation}
where the hidden constant depends on the function $w$ only through its $\| w_{yy} \|_{L^2(y^{\beta},\C_{\Y})}$ and $\| \nabla \nabla_{x'} w \|_{L^2(y^{\alpha},\C_{\Y})}$ norms. We then need to estimate the remaining term $\|\GRAD ( w - \Pi_{\T_\Y} w) \|_{L^2(y^\alpha,\Omega \times (0,y_1))}$. To do this, we proceed as follows: 
\begin{multline*}
\|\GRAD ( w - \Pi_{\T_\Y} w) \|_{L^2(y^\alpha,\Omega \times (0,y_1))} \leq  
\|\GRAD ( w - \mathcal{L}_{\T_\Y} w) \|_{L^2(y^\alpha,\Omega \times (0,y_1))}
\\
+ \|\GRAD ( \mathcal{L}_{\T_\Y} w - \Pi_{\T_\Y} w) \|_{L^2(y^\alpha,\Omega \times (0,y_1))},
\end{multline*}
where $\mathcal{L}_{\T_\Y}$ is defined as in \eqref{eq:L}.  The first term of the expression above is controlled by the right hand side of \eqref{eq:w-Lw} by invoking, again, \cite[Theorem 5.4]{NOS}. To estimate the term
$ \mathcal{L}_{\T_\Y} w - \Pi_{\T_\Y} w$ over the first layer $\Omega \times (0,y_1)$, we use the definitions of $\mathcal{L}_{\T_\Y}$ and $\Pi_{\T_\Y}$ given by \eqref{eq:L} and \eqref{eq:defofPi}, respectively and exploit the fact that, for every node $\vero' \in \T_{\Omega}$, we have $\mathcal{L}_{\T_\Y} w (\vero',y_1) = \Pi_{\T_\Y} w (\vero',y_1)$, to write
\begin{multline*}
 \|\GRAD ( \mathcal{L}_{\T_\Y} w - \Pi_{\T_\Y} w) \|^2_{L^2(y^\alpha,\Omega \times (0,y_1))} = 
   \sum_{K \in \T_{\Omega}} \int_{0}^{y_1} y^\alpha \int_{K} |\GRAD ( \mathcal{L}_{\T_\Y} w - \Pi_{\T_\Y} w) |^2 \diff x' \diff y \\
 =  \sum_{K \in \T_{\Omega}}  \int_{0}^{y_1} y^\alpha \int_{K} 
   \left| \sum_{\vero' \in \N_{\Omega} } \left( \mathcal{L}_{\T_\Y} w(\vero',0) - \calR_{\T_\Omega} w(\vero',0)  \right) \GRAD(\Lambda_{\vero'}(x') \Lambda_0(y)) \right|^2\diff x' \diff y,
\end{multline*}
where $\calR_{\T_\Omega}$ is defined by \eqref{eq:CH_operator}, $\Lambda_{\vero'}$ denotes the basis function associated with the node $\vero'$ in the discretization $\T_{\Omega}$ and $\Lambda_0(y)$ the one that corresponds to the node $y_0=0$ in the discretization $\mathcal{I}_{\Y}$ defined by the mesh points \eqref{eq:gradedmesh}.

Using the well known finite intersection property of the supports of the basis functions $\Lambda_{\vero'}$ we see that, to conclude, it suffices to estimate, for $i=1,\ldots,d+1$,
\[
  \calE_i : = \int_{0}^{y_1} y^\alpha \int_{K} 
   \left| \left( \mathcal{L}_{\T_\Y} w(\vero',0) - \calR_{\T_\Omega} w(\vero',0)  \right) \partial_{x^i}(\Lambda_{\vero'}(x') \Lambda_0(y)) \right|^2\diff x' \diff y.
\]
To achieve this we notice, first of all, that if $i=1,\ldots,d$ we have
\begin{equation}
\label{eq:calEx}
  \calE_i \leq h_{\vero'}^{-2} |K| \left( \int_0^{y_1} y^\alpha \diff y \right) |\mathcal{L}_{\T_\Y} w(\vero',0) - \calR_{\T_\Omega} w(\vero',0)|^2,
\end{equation}
while, if $i=d+1$,
\begin{equation}
\label{eq:calEy}
  \calE_{d+1} \leq y_1^{-2} |K| \left( \int_0^{y_1} y^\alpha \diff y \right) |\mathcal{L}_{\T_\Y} w(\vero',0) - \calR_{\T_\Omega} w(\vero',0)|^2.
\end{equation}
We must uniformly bound the difference between $\calL_{\T_\Y} w $ and $\calR_{\T_\Omega} w$ over $\N_\Omega$.

Since $\int \mu_2(y) \diff y = 1$ and $\mu_1$ has vanishing moments, we have that, for \mae $z' \in \Omega$,
\begin{align*}
  \calR_{\T_\Omega} w (\vero',0) &= \int_{S_{\vero'}'} \mu_{1,\vero'}(x') w(x',0) \diff x' \\ &=
  \int_{S_{(\vero',0)}} \mu_{(\vero',0)}(x',y) \left[ w(x',0) + \GRAD_{x'}w(z',y)(\vero' - x') \right] \diff x' \diff y.
\end{align*}
Using this we see that, for any $\vero' \in \N_\Omega$, we have
\begin{multline*}
  (\calL_{\T_\Y} - \calR_{\T_\Omega} ) w(\vero',0) = 
  \int_{S_{(\vero',0)}} \mu_{(\vero',0)} \left[  w(x',y) - w(x',0) \right] \diff x' \diff y \\
  + \int_{S_{(\vero',0)}} \mu_{(\vero',0)} \GRAD_{x'} \left[ w(x',y) - w(z',y) \right](\vero' - x') \diff x' \diff y \\
  + \int_{S_{(\vero',0)}} \mu_{(\vero',0)}  \partial_y w(x',y) (0 - y ) \diff x' \diff y
  = \textrm{I} + \textrm{II} + \textrm{III}.
\end{multline*}
We now proceed to bound each one of these terms separately.

\begin{enumerate}[$\bullet$]

  \item \emph{Bound on} \textrm{III}: Using the scaling properties of the function $\mu_{(\vero',0)}$ and the Cauchy-Schwarz inequality we obtain
  \begin{align*}
    | \textrm{III} | \leq \frac{1}{h_{\vero'}^d} \left( \int_{S_{(\vero',0)}} y^{-\alpha} \diff x' \diff y \right)^{1/2} \| \partial_y w \|_{L^2(y^\alpha, S_{(\vero',0)})}.
  \end{align*}
  Since $w$ is $\alpha$-harmonic and \eqref{eq:gradedmesh} dictates the grading of the mesh $\T_{\Y}$ on the extended dimension,
  we have that $\| \partial_y w \|_{L^2(y^\alpha, S_{(\vero',0)})} \lesssim M^{-1}$, where $M^d \approx \# \T_{\Omega}$; see \cite[\S 5.2]{NOS}. This yields
  \begin{equation}
  \label{eq:III}
    | \textrm{III} | \lesssim \frac1{h_{\vero'}^d} \left( \int_{S_{(\vero',0)}} y^{-\alpha} \diff x' \diff y \right)^{1/2} M^{-1} \lesssim \frac1{h_{\vero'}^{d/2}} \left( \int_0^{y_1} y^{-\alpha} \diff y \right)^{1/2} M^{-1}.
  \end{equation}

  \item \emph{Bound on} \textrm{I}: A mean value result allows us to write
  \[
    |\textrm{I}| \leq \int_{S_{(\vero',0)}} \mu_{(\vero',0)} | \partial_y w(x',\eta(y))| y \diff x' \diff y.
  \]
  After this, we proceed as in the bound for \textrm{III} to conclude
  \begin{equation}
  \label{eq:I}
    | \textrm{I} | \lesssim \frac1{h_{\vero'}^d} \left( \int_{S_{(\vero',0)}} y^{-\alpha} \diff x' \diff y \right)^{1/2} M^{-1} \lesssim \frac1{h_{\vero'}^{d/2}} \left( \int_0^{y_1} y^{-\alpha} \diff y \right)^{1/2} M^{-1}.
  \end{equation}
  
  \item \emph{Bound on} \textrm{II}: We use that $w$ is $\alpha$-harmonic and a mean value theorem to get
  \begin{align*}
    | \textrm{II} | &\leq \frac1{h_{\vero'}^{d-2} y_1} \int_{S_{(\vero',0)}} | \GRAD_{x'}^2 w(\xi(x',z),y) | \diff x' \diff y \\
    &\leq \frac1{h_{\vero'}^{d-2} y_1} \left( \int_{S_{(\vero',0)}} y^{-\alpha} \diff x' \diff y \right)^{1/2} \| \GRAD_{x'}^2 w \|_{L^2(y^\alpha,S_{(\vero',0)})}.
  \end{align*}
  Using the local regularity results of \cite[Theorem 2.9]{NOS}, we derive
  \[
    \| \GRAD_{x'}^2 w \|_{L^2(y^\alpha,\Omega\times(0,y_1))}^2 \lesssim y_1 \| \tr w \|_{\polH^{1+s}(\Omega)}^2.
  \]
  This allows us to obtain
  \begin{equation}
  \label{eq:II}
   | \textrm{II} | \lesssim \frac1{h_{\vero'}^{d-2} y_1^{1/2} } \left( \int_{S_{(\vero',0)}} y^{-\alpha} \diff x' \diff y \right)^{1/2} \lesssim \frac1{h_{\vero'}^{d/2-2} y_1^{1/2}} \left( \int_0^{y_1} y^{-\alpha} \diff y \right)^{1/2}.
  \end{equation}
\end{enumerate}

To conclude we insert \eqref{eq:III}--\eqref{eq:II} into \eqref{eq:calEx} and \eqref{eq:calEy}. If $i=1,\ldots, d$ we have
\begin{align*}
  \calE_i &\lesssim h_{\vero'}^{d-2} \left( \int_0^{y_1} y^\alpha \diff y \right) \left[ \textrm{I}^2 + \textrm{II}^2 + \textrm{III}^2 \right] \\
  &\lesssim h_{\vero'}^{d-2} \left( \int_0^{y_1} y^\alpha \diff y \right) \left(\int_0^{y_1} y^{-\alpha} \diff y \right)
  \left[ \frac{M^{-2}}{h_{\vero'}^d}   + \frac{1}{h_{\vero'}^{d-4} y_1 } \right]
  = \calE_{i,1} + \calE_{i,2}.
\end{align*}
Using the fact that $y^\alpha \in A_2(\Real^{d+1})$
\[
  \calE_{i,1} = \left( \frac1{y_1}\int_0^{y_1} y^\alpha \diff y \right) \left(\frac1{y_1}\int_0^{y_1} y^{-\alpha} \diff y \right) \frac{y_1^2}{h_{\vero'}^2}  M^{-2}\lesssim ( \# \T_{\Y})^{-\frac{2}{(n+1)}},
\]
where we have used that $M \approx (\# \T_{\Omega})^{1/d} \approx (\# \T_{\Y})^{1/(d+1)}$ together with the fact that $y_1/h_{\vero'}$ is uniformly bounded. We now proceed to estimate the term $\calE_{i,2}$: 
\[
 \calE_{i,2} = \left( \frac1{y_1}\int_0^{y_1} y^\alpha \diff y \right) \left(\frac1{y_1}\int_0^{y_1} y^{-\alpha} \diff y \right) h_{\vero'}^2 y_1.
\]
Since $y^{\alpha} \in A_2(\R^{n+1})$ and $h_{\vero'} \approx M^{-2}$ we have $\calE_{i,2} \lesssim M^{-2}$. These yield $\calE_i \lesssim M^{-2}$.

Finally, we focus in the case $i=d+1$:
\[
 \calE_{d+1} \lesssim  \frac{h_{\vero'}^{d}}{y_1^2} \left( \int_0^{y_1} y^\alpha \diff y \right) \left[ \textrm{I}^2 + \textrm{II}^2 + \textrm{III}^2 \right]
 \lesssim M^{-2} + y_1^{-1} h_{\vero'}^4,
\]
where we have used, again, that $y^{\alpha} \in A_2(\R^{n+1})$. Since the grading parameter $\gamma$ satisfies $\gamma > 3/(2s)$, we have that, for $s>3/8$, $\theta_0 = \min\{1,(8s-3)/(4s)\}>0$. Therefore, we obtain that, for $\theta < \theta_0$, we can bound $\calE_{d+1} \lesssim M^{-\theta} \approx (\# \T_{\Y})^{-\theta/(d+1)}$. Collecting the derived estimates for $\calE_{i}$, with $i=1,\cdots,d$ and $\calE_{d+1}$ gives us \eqref{eq:Pi_estimate}.
\end{proof}

We now show the stability of the operator $\Pi_{\T_{\Y}}$ \eqref{eq:Pistable}.

\begin{corollary}[stability]
In the setting of Theorem~\ref{TH:Pi_approx}, we have 
\[
 \|  \nabla \Pi_{\T_{\Y}} w\|_{L^2(y^{\alpha},\C_{\Y})} \lesssim \| w_{yy} \|_{ L^2( y^{\beta},\C_{\Y} )} + \| \nabla \nabla_{x'} w \|_{L^2(y^{\alpha},\C_{\Y})}
 + \| \tr w\|_{\polH^{1+s}(\Omega)},
\]
\end{corollary}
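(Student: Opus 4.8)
The plan is to deduce the bound directly from Theorem~\ref{TH:Pi_approx} by the triangle inequality; the only extra work is that the quantity $\|\nabla w\|_{L^2(y^\alpha,\C_\Y)}$ — which enters the hidden constant of \eqref{eq:Pi_estimate} — must itself be absorbed into the right-hand side. First I would write
\[
  \| \nabla \Pi_{\T_\Y} w \|_{L^2(y^\alpha,\C_\Y)} \le \| \nabla ( w - \Pi_{\T_\Y} w ) \|_{L^2(y^\alpha,\C_\Y)} + \| \nabla w \|_{L^2(y^\alpha,\C_\Y)} .
\]
For the first summand I would invoke Theorem~\ref{TH:Pi_approx} with any fixed admissible $\theta<\theta_0$ (possible since $s>3/8$): because $\#\T_\Y\ge 1$ we have $(\#\T_\Y)^{-\theta/(d+1)}\le 1$, so this term is controlled by the hidden constant of \eqref{eq:Pi_estimate}, which — upon reading off its dependence from the proof of that theorem — is a fixed multiple of $\|w_{yy}\|_{L^2(y^\beta,\C_\Y)}+\|\nabla\nabla_{x'}w\|_{L^2(y^\alpha,\C_\Y)}+\|\nabla w\|_{L^2(y^\alpha,\C_\Y)}+\|\tr w\|_{\polH^{1+s}(\Omega)}$.

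It then remains to control $\|\nabla w\|_{L^2(y^\alpha,\C_\Y)}$ by $\|\tr w\|_{\polH^{1+s}(\Omega)}$. Here I would use that, since $w$ is $\alpha$-harmonic and lies in $\HL(y^\alpha,\C_\Y)$, it vanishes on $\Gamma_D$ and hence solves \eqref{eq:mathcalH} with boundary datum $\tr w$; in other words $w=\calH_\alpha(\tr w)$. By the weighted Poincar\'e inequality \eqref{Poincare_ineq} one has $\|\nabla w\|_{L^2(y^\alpha,\C_\Y)}\approx\|w\|_{\HLn(y^\alpha,\C_\Y)}$, and the stability of the truncated $\alpha$-harmonic extension, $\|\calH_\alpha v\|_{\HLn(y^\alpha,\C_\Y)}\lesssim\|v\|_{\Hs}$ for $\Y\ge1$ (a standard energy-minimization argument; see \cite{NOS}), together with the continuous embedding $\polH^{1+s}(\Omega)\hookrightarrow\Hs$ — valid because $1+s>s>0$, so that $\lambda_l^{s}\le\lambda_1^{-1}\lambda_l^{1+s}$ in the spectral definition — yields $\|w\|_{\HLn(y^\alpha,\C_\Y)}\lesssim\|\tr w\|_{\Hs}\lesssim\|\tr w\|_{\polH^{1+s}(\Omega)}$. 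Substituting this into the estimate of the previous paragraph gives the claim.

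The only delicate point, and thus the main obstacle, is the bookkeeping in the first step: Theorem~\ref{TH:Pi_approx} only asserts that its constant depends on $w$ through the four listed norms, so one must revisit its proof and verify that this dependence is in fact (at most) linear. Inspecting the estimates there, each of the terms $\textrm{I}$, $\textrm{II}$ and $\textrm{III}$, as well as the estimate on $\Omega\times(y_1,\Y)$, is bounded by a single power of one of those norms, so the required linearity holds and everything else is routine.
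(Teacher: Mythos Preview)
Your proof is correct and follows exactly the paper's route: triangle inequality plus Theorem~\ref{TH:Pi_approx}. The paper's two-line argument leaves implicit the absorption of $\|\nabla w\|_{L^2(y^\alpha,\C_\Y)}$ into $\|\tr w\|_{\polH^{1+s}(\Omega)}$ via the $\alpha$-harmonic extension bound, which you spell out carefully; your verification that the hidden constant in \eqref{eq:Pi_estimate} depends linearly on the listed norms is likewise a point the paper takes for granted.
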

where the hidden constant does not depend on $\T_\Y$.
\begin{proof}
The result follows as a simple application of Theorem~\ref{TH:Pi_approx}. In fact,
\begin{align*}
\|  \nabla \Pi_{\T_{\Y}} w\|_{L^2(y^{\alpha},\C_{\Y})}  & \leq  
\|  \nabla ( w - \Pi_{\T_{\Y}} w) \|_{L^2(y^{\alpha},\C_{\Y})} +
\|  \nabla w \|_{L^2(y^{\alpha},\C_{\Y})}
\end{align*}
and \eqref{eq:Pi_estimate} yield the desired estimate.
\end{proof}

To conclude let us use this operator to obtain error estimates.

\begin{corollary}[error estimate for $\ve$]
\label{col:errestU}
Assume that $s>3/8$ and set
\[
  \vartheta_0 = -1, \ s \in \left[ \frac34, 1 \right), \qquad \vartheta = \frac{8s-3}{4s}, \ s \in \left( \frac38, \frac34 \right). 
\]
In the setting of Theorem~\ref{thm:optapriori}, if $\dt \approx (\# \T_{\Y})^{\frac{-1}{d+1}}$, then
\begin{equation}
\label{eq:final_estimate}
E(\ve,V^\dt_{\T_\Y})
\lesssim |\log \# \T_{\Y}|^s (\# \T_{\Y})^{-\frac{\vartheta}{d+1}}
\end{equation}
for $\vartheta < \vartheta_0$. The hidden constant blows up as $\vartheta \uparrow \vartheta_0$, is independent of $\# \T_{\Y}$ and depends only on the problem data.
\end{corollary}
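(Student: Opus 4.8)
The plan is to feed the interpolation estimate of Theorem~\ref{TH:Pi_approx} into the abstract bound of Theorem~\ref{thm:optapriori} and then balance the resulting powers of $M$, using $\#\T_\Y\approx M^{d+1}$ and the prescribed relation $\dt\approx(\#\T_\Y)^{-1/(d+1)}\approx M^{-1}$. Recall from \eqref{vare} that $\vare^\dt=V^\dt-\Pi_{\T_\Y}V^\dt$, so that $\bar{\vare}^\dt(t)=V^{k+1}-\Pi_{\T_\Y}V^{k+1}$ on $(t_k,t_{k+1}]$ and hence
\[
  \| \GRAD\bar{\vare}^\dt \|_{L^\infty(0,T;L^2(y^\alpha,\C_\Y))}=\max_{k}\| \GRAD(V^k-\Pi_{\T_\Y}V^k) \|_{L^2(y^\alpha,\C_\Y)}.
\]
Thus the whole argument reduces to applying Theorem~\ref{TH:Pi_approx} to each iterate $V^k$ with a constant uniform in $k$ and $\dt$.

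The first thing I would check is that Theorem~\ref{TH:Pi_approx} is legitimately applicable to $w=V^k$. Since the constraint in \eqref{eq:semidicsflow} involves only the trace, testing with $V^{k+1}\pm\phi_0$ for $\phi_0\in C_0^\infty$ supported in the interior of $\C_\Y$ gives equality, so $V^{k+1}$ is $\alpha$-harmonic in $\C_\Y$; combined with the homogeneous condition on $\Gamma_D$ this forces $V^{k+1}=\calH_\alpha U^{k+1}$ with $U^{k+1}=\tr V^{k+1}$. Moreover, as was already used inside the proof of Theorem~\ref{thm:optapriori} (via Proposition~\ref{prop:stabsemidiscrete} and the regularity hypotheses \eqref{eq:regu1}--\eqref{eq:regext}), the discrete solution satisfies $\tr\bar{V}^\dt\in L^\infty(0,T;\polH^{1+s}(\Omega))$ uniformly in $\dt$; hence each $U^k$ lies in $\polH^{1+s}(\Omega)$ with norm controlled by the data, and \cite[Theorem~2.7]{NOS} (\cf \eqref{eq:reg_estimates}) then gives $V^k\in H^2(y^\beta,\C_\Y)\cap\HL(y^\alpha,\C_\Y)$. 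Theorem~\ref{TH:Pi_approx} therefore yields, for every $\vartheta<\theta_0=\min\{1,\tfrac{8s-3}{4s}\}$,
\[
  \| \GRAD(V^k-\Pi_{\T_\Y}V^k) \|_{L^2(y^\alpha,\C_\Y)}\lesssim(\#\T_\Y)^{-\vartheta/(d+1)}\approx M^{-\vartheta},
\]
uniformly in $k$ and $\dt$, whence $\| \GRAD\bar{\vare}^\dt \|_{L^\infty(0,T;L^2(y^\alpha,\C_\Y))}\lesssim M^{-\vartheta}$.

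Next I would substitute this bound, together with $\dt\approx M^{-1}$ (so that $M^{-(1+s)}/\dt^{1/2}\approx M^{-(s+1/2)}$), into the estimate of Theorem~\ref{thm:optapriori} to obtain
\[
  E(\ve,V^\dt_{\T_\Y})\lesssim M^{-1}+|\log M|^s\bigl(M^{-1}+M^{-\vartheta}+M^{-(s+1/2)}\bigr).
\]
Since $\vartheta<\theta_0\le 1$ we have $M^{-1}\le M^{-\vartheta}$, and an elementary computation shows $\theta_0\le s+\tfrac12$ for every $s\in(\tfrac38,1)$: for $s\ge\tfrac34$ this is $1\le s+\tfrac12$, and for $s<\tfrac34$ it reduces to $4s^2-6s+3\ge0$, which holds because the discriminant of the left side is negative. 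Hence $M^{-(s+1/2)}\le M^{-\vartheta}$ as well, the bracket is $\lesssim M^{-\vartheta}$, and $E(\ve,V^\dt_{\T_\Y})\lesssim|\log M|^s M^{-\vartheta}$. Using once more $M\approx(\#\T_\Y)^{1/(d+1)}$ — so $M^{-\vartheta}\approx(\#\T_\Y)^{-\vartheta/(d+1)}$ and $|\log M|^s\approx|\log\#\T_\Y|^s$ — yields \eqref{eq:final_estimate}, with $\vartheta_0$ equal to the threshold $\theta_0$ of Theorem~\ref{TH:Pi_approx}, i.e. $\vartheta_0=1$ for $s\in[\tfrac34,1)$ and $\vartheta_0=\tfrac{8s-3}{4s}$ for $s\in(\tfrac38,\tfrac34)$; the blow-up as $\vartheta\uparrow\vartheta_0$ is inherited from Theorem~\ref{TH:Pi_approx}.

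The only genuinely delicate point — and the one I would spend care on — is the verification in the second paragraph that each discrete iterate $V^k$ meets the hypotheses of Theorem~\ref{TH:Pi_approx}: $\alpha$-harmonicity (immediate, since the obstacle is imposed only on the trace) and, above all, the \emph{uniform-in-$\dt$} bound $\sup_k\|\tr V^k\|_{\polH^{1+s}(\Omega)}\lesssim 1$. This uniform regularity of the discrete solution is precisely what the standing assumptions of Theorem~\ref{thm:optapriori} (that \eqref{eq:regu1}--\eqref{eq:regext} hold for $\hat{V}^\dt$, not just for $\ve$) are there to supply; everything else is bookkeeping with exponents.
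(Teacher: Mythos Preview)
Your proposal is correct and follows essentially the same route as the paper: plug the interpolation bound of Theorem~\ref{TH:Pi_approx} into the estimate of Theorem~\ref{thm:optapriori}, set $\dt\approx M^{-1}$, and compare the resulting powers of $M$. You are simply more explicit than the paper in checking that each $V^k$ is $\alpha$-harmonic with uniformly bounded $\polH^{1+s}$ trace (so Theorem~\ref{TH:Pi_approx} applies) and in carrying out the exponent comparison $\theta_0\le s+\tfrac12$; this added care is welcome and does not change the argument.
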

\begin{proof}
The choice of $\tau$ and, depending on the value of $s$, a comparison of the terms $(\# \T_{\Y})^{-\theta/(d+1)}$
and $(\# \T_{\Y})^{-(1+s)/(d+1)} \tau^{-1/2}$ on the right-hand side of the estimate of Theorem \ref{thm:optapriori} yields the result.
\end{proof}

\begin{corollary}[error estimate for $\ue$]
\label{col:errestu}
Assume that $s>3/8$. In the setting of Theorem~\ref{thm:optapriori} and Corollary~\ref{col:errestU} we have
\[
\calE(\ue,U^\dt_{\T_\Omega})
\lesssim |\log \# \T_{\Y}|^s (\# \T_{\Y})^{-\frac{\vartheta}{d+1}}
\]
for $\vartheta < \vartheta_0$. The hidden constant blows up as $\theta \uparrow \theta_0$, is independent of $\# \T_{\Y}$ and depends only on the problem data.
\end{corollary}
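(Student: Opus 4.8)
The plan is to reduce the asserted bound for $\calE(\ue,U^\dt_{\T_\Omega})$ to the already established bound for $E(\ve,V^\dt_{\T_\Y})$ of Corollary~\ref{col:errestU}, in exactly the same way that Corollary~\ref{col:semiimplII} was obtained from Corollary~\ref{col:semiimpl}. The key observation is that every quantity entering the definition \eqref{eq:calE} of $\calE$ is the trace of the corresponding quantity entering the definition \eqref{eq:E} of $E$.

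First I would record the identifications. By the Caffarelli--Silvestre extension result \cite{CS:07,NOS} we have $\tr\ve = \ue$, and by construction $U^\dt_{\T_\Omega} = \tr V^\dt_{\T_\Y}$. Since the trace operator is linear and acts only in the variable $x'$, it commutes with the temporal interpolants of Section~\ref{sec:notation}, so that $\hat U^\dt_{\T_\Omega} = \tr\hat V^\dt_{\T_\Y}$ and $\bar U^\dt_{\T_\Omega} = \tr\bar V^\dt_{\T_\Y}$. Consequently
\[
  \calE(\ue,U^\dt_{\T_\Omega}) = \| \tr(\ve - \hat V^\dt_{\T_\Y}) \|_{L^\infty(0,T;\Ldeux)} + \| \tr(\ve - \bar V^\dt_{\T_\Y}) \|_{L^2(0,T;\Hs)}.
\]
The first summand is literally the first summand of $E(\ve,V^\dt_{\T_\Y})$. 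For the second one, note that $w(t):=(\ve-\bar V^\dt_{\T_\Y})(t)$ vanishes on $\partial_L\C_\Y$ for a.e.\ $t$; the weighted Poincar\'e inequality \eqref{Poincare_ineq} applied on $\C_\Y$ (whose constant depends only on $\Omega$), together with the trace estimate into $\Hs$ --- the truncated-cylinder analogue of \eqref{Trace_estimate} --- give $\|\tr w(t)\|_{\Hs}\lesssim\|\GRAD w(t)\|_{L^2(y^\alpha,\C_\Y)}$. Squaring, integrating in time, and recalling \eqref{eq:E}, we arrive at $\calE(\ue,U^\dt_{\T_\Omega})\lesssim E(\ve,V^\dt_{\T_\Y})$.

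It then only remains to invoke Corollary~\ref{col:errestU}, which --- in the setting of Theorem~\ref{thm:optapriori} and with the choice $\dt\approx(\#\T_\Y)^{-1/(d+1)}$ --- bounds $E(\ve,V^\dt_{\T_\Y})$ by $|\log\#\T_\Y|^s(\#\T_\Y)^{-\vartheta/(d+1)}$ for $\vartheta<\vartheta_0$, with the hidden constant blowing up as $\vartheta\uparrow\vartheta_0$. Chaining the two displayed inequalities yields the claim, and the dependence of the constants is inherited directly from Corollary~\ref{col:errestU}.

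I do not expect any serious obstacle here; the proof is essentially a bookkeeping exercise once Corollary~\ref{col:errestU} is in hand. The only point requiring mild care is the trace-plus-Poincar\'e step on the bounded cylinder $\C_\Y$, because $\ve$ restricted to $\C_\Y$ does not vanish on $\Omega\times\{\Y\}$; this is harmless since the weighted Poincar\'e inequality only requires vanishing on the lateral boundary $\partial_L\C_\Y$ --- which $\ve-\bar V^\dt_{\T_\Y}$ does satisfy --- and the relevant Poincar\'e constant (controlled by $\diam\Omega$ in the $x'$-directions) is independent of the truncation height $\Y\approx\log\#\T_\Y$. Everything else is a direct application of results already proved above.
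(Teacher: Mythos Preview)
Your proposal is correct and follows essentially the same route as the paper: the paper's one-line proof simply cites Corollary~\ref{col:semiimplII} (whose proof contains exactly the trace-based reduction $\calE(\ue,\cdot)\lesssim E(\ve,\cdot)$ that you spell out) together with the estimate \eqref{eq:final_estimate} of Corollary~\ref{col:errestU}. Your explicit discussion of the Poincar\'e step on $\C_\Y$ is a harmless elaboration of what the paper leaves implicit.
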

\begin{proof}
The desired estimate follows from Corollary \ref{col:semiimplII} and the estimate \eqref{eq:final_estimate}.
\end{proof}

\bibliographystyle{plain}
\bibliography{biblio}

\def\cprime{$'$} \def\cprime{$'$} \def\cprime{$'$} \def\cprime{$'$}
  \def\cprime{$'$} \def\cprime{$'$}
\begin{thebibliography}{10}

\bibitem{Adams}
R.A. Adams.
\newblock {\em Sobolev spaces}.
\newblock Academic Press, 1975.

\bibitem{AllenPetrosyan}
M.~Allen, E.~Lindgren, and A.~Petrosyan.
\newblock The two-phase fractional obstacle problem.
\newblock {\em SIAM Journal on Mathematical Analysis}, 47(3):1879--1905, 2015.

\bibitem{atanackovic2014fractional}
T.M. Atanackovic, S.~Pilipovic, B.~Stankovic, and D.~Zorica.
\newblock {\em Fractional Calculus with Applications in Mechanics: Vibrations
  and Diffusion Processes}.
\newblock John Wiley \& Sons, 2014.

\bibitem{BS}
M.{\v{S}}. Birman and M.Z. Solomjak.
\newblock {\em Spektralnaya teoriya samosopryazhennykh operatorov v gilbertovom
  prostranstve}.
\newblock Leningrad. Univ., Leningrad, 1980.

\bibitem{Brezis}
H.~Br{\'e}zis.
\newblock {\em Op\'erateurs maximaux monotones et semi-groupes de contractions
  dans les espaces de {H}ilbert}.
\newblock North-Holland, 1973.

\bibitem{bio}
A.~Bueno-Orovio, D.~Kay, V.~Grau, B.~Rodriguez, and K.~Burrage.
\newblock Fractional diffusion models of cardiac electrical propagation: role
  of structural heterogeneity in dispersion of repolarization.
\newblock {\em J. R. Soc. Interface}, 11(97), 2014.

\bibitem{MR3100955}
L.~Caffarelli and A.~Figalli.
\newblock Regularity of solutions to the parabolic fractional obstacle problem.
\newblock {\em J. Reine Angew. Math.}, 680:191--233, 2013.

\bibitem{CS:07}
L.~Caffarelli and L.~Silvestre.
\newblock An extension problem related to the fractional {L}aplacian.
\newblock {\em Comm. Part. Diff. Eqs.}, 32(7-9):1245--1260, 2007.

\bibitem{wow}
W.~Chen.
\newblock A speculative study of $2/3$-order fractional laplacian modeling of
  turbulence: Some thoughts and conjectures.
\newblock {\em Chaos}, 16(2):1--11, 2006.

\bibitem{MR1742264}
Z.~Chen and R.H. Nochetto.
\newblock Residual type a posteriori error estimates for elliptic obstacle
  problems.
\newblock {\em Numer. Math.}, 84(4):527--548, 2000.

\bibitem{CiarletBook}
P.G. Ciarlet.
\newblock {\em The finite element method for elliptic problems}, volume~40 of
  {\em Classics in Applied Mathematics}.
\newblock SIAM, Philadelphia, PA, 2002.

\bibitem{Javier}
J.~Duoandikoetxea.
\newblock {\em Fourier analysis}.
\newblock American Mathematical Society, Providence, RI, 2001.

\bibitem{Guermond-Ern}
A.~Ern and J.-L. Guermond.
\newblock {\em Theory and practice of finite elements}.
\newblock Springer, New York, 2004.

\bibitem{GH:14}
P.~Gatto and J.~Hesthaven.
\newblock Numerical approximation of the fractional {L}aplacian via hp-finite
  elements, with an application to image denoising.
\newblock {\em J. Sci. Comp.}, pages 1--22, 2014.
\newblock DOI:10.1007/s10915-014-9959-1.

\bibitem{HB:10}
Y.~Ha and F.~Bobaru.
\newblock Studies of dynamic crack propagation and crack branching with
  peridynamics.
\newblock {\em Int. J. Fracture}, 162(1-2):229--244, 2010.

\bibitem{HKM}
J.~Heinonen, T.~Kilpel{\"a}inen, and O.~Martio.
\newblock {\em Nonlinear potential theory of degenerate elliptic equations}.
\newblock Oxford University Press, New York, 1993.

\bibitem{ICH}
R.~Ishizuka, S.-H. Chong, and F.~Hirata.
\newblock An integral equation theory for inhomogeneous molecular fluids: The
  reference interaction site model approach.
\newblock {\em J. Chem. Phys}, 128(3), 2008.

\bibitem{KS}
D.~Kinderlehrer and G.~Stampacchia.
\newblock {\em An introduction to variational inequalities and their
  applications}, volume~88 of {\em Pure and Applied Mathematics}.
\newblock Academic Press, 1980.

\bibitem{MR0057472}
P.P. Korovkin.
\newblock On convergence of linear positive operators in the space of
  continuous functions.
\newblock {\em Doklady Akad. Nauk SSSR (N.S.)}, 90:961--964, 1953.

\bibitem{MR0089939}
P.P. Korovkin.
\newblock On the order of the approximation of functions by linear positive
  operators.
\newblock {\em Dokl. Akad. Nauk SSSR (N.S.)}, 114:1158--1161, 1957.

\bibitem{Kufner80}
A.~Kufner.
\newblock {\em Weighted {S}obolev spaces}.
\newblock Teubner, Leipzig, 1980.

\bibitem{MR2064019}
S.~Z. Levendorski{\u\i}.
\newblock Pricing of the {A}merican put under {L}\'evy processes.
\newblock {\em Int. J. Theor. Appl. Finance}, 7(3):303--335, 2004.

\bibitem{Lions}
J.-L. Lions and E.~Magenes.
\newblock {\em Non-homogeneous boundary value problems and applications. {V}ol.
  {I}}.
\newblock Springer-Verlag, New York, 1972.

\bibitem{NOS4}
R.H. Nochetto, E.~Ot\'arola, and A.J. Salgado.
\newblock Convergence rates for the obstacle problem: classical, thin and
  fractional.
\newblock Phil. Trans. R. Soc. A (accepted), 2014.

\bibitem{NOS3}
R.H. Nochetto, E.~Ot\'arola, and A.J. Salgado.
\newblock A {PDE} approach to space-time fractional parabolic problems.
\newblock arXiv:1404.0068, 2014.

\bibitem{NOS}
R.H. Nochetto, E.~Ot{\'a}rola, and A.J. Salgado.
\newblock A {PDE} {A}pproach to {F}ractional {D}iffusion in {G}eneral
  {D}omains: {A} {P}riori {E}rror {A}nalysis.
\newblock {\em Found. Comput. Math.}, 15(3):733--791, 2015.
\newblock DOI:10.1007/s10208-014-9208-x.

\bibitem{NOS2}
R.H. Nochetto, E.~Ot{\'a}rola, and A.J. Salgado.
\newblock Piecewise polynomial interpolation in {M}uckenhoupt weighted
  {S}obolev spaces and applications.
\newblock {\em Numer. Math.}, pages 1--46, 2015.
\newblock DOI:http://dx.doi.org/10.1007/s00211-015-0709-6.

\bibitem{NSV}
R.H. Nochetto, G.~Savar{\'e}, and C.~Verdi.
\newblock A posteriori error estimates for variable time-step discretizations
  of nonlinear evolution equations.
\newblock {\em Comm. Pure Appl. Math.}, 53(5):525--589, 2000.

\bibitem{NTZ:10}
R.H. Nochetto, T.~von Petersdorff, and C.-S. Zhang.
\newblock A posteriori error analysis for a class of integral equations and
  variational inequalities.
\newblock {\em Numer. Math.}, 116(3):519--552, 2010.

\bibitem{MR1933037}
R.H. Nochetto and L.B. Wahlbin.
\newblock Positivity preserving finite element approximation.
\newblock {\em Math. Comp.}, 71(240):1405--1419, 2002.

\bibitem{MR1424787}
H.~Pham.
\newblock Optimal stopping, free boundary, and {A}merican option in a
  jump-diffusion model.
\newblock {\em Appl. Math. Optim.}, 35(2):145--164, 1997.

\bibitem{Tartar}
L.~Tartar.
\newblock {\em An introduction to {S}obolev spaces and interpolation spaces}.
\newblock Springer, Berlin, 2007.

\bibitem{Turesson}
B.O. Turesson.
\newblock {\em Nonlinear potential theory and weighted {S}obolev spaces}.
\newblock Springer, Berlin, 2000.

\end{thebibliography}

\end{document}